\newtheorem{theorem}{Theorem}[section]
\newtheorem{corollary}[theorem]{Corollary}
\newtheorem{lemma}[theorem]{Lemma}
\newtheorem{proposition}[theorem]{Proposition}
\title{Gibbs States and Gibbsian Specifications on the space $\mathbb{R}^{\mathbb{N}}$}
\author{Artur O. Lopes \thanks{IME - UFRGS. Partially supported by CNPq through project
PQ 310818/2015-0 and CAPES - Finance Code 001} \,and Victor Vargas \thanks{IME - UFRGS. Supported by PNPD-CAPES grant.}}
\begin{document}

\maketitle

\centerline{Abstract}
\smallskip

We are interested in the study of Gibbs and equilbrium probabilities on the lattice $\mathbb{R}^{\mathbb{N}}$.
Consider the unilateral full-shift defined on the non-compact set $\mathbb{R}^{\mathbb{N}}$ and an $\alpha$-H\"older continuous potential $A$ from $\mathbb{R}^{\mathbb{N}}$ into $\mathbb{R}$. From a suitable class of  a priori probability measures $\nu$ (over the Borelian sets of $\mathbb{R}$) we define the Ruelle operator associated to $A$ (using an adequate extension of this operator to the compact set $\overline{\mathbb{R}}^\mathbb{N}=(S^1)^\mathbb{N}$)  and we show the existence of eigenfunctions, conformal probability measures and equilibrium states associated to $A$. The above, can be seen as a generalization of the results obtained in the compact case  for the XY-model.  We also introduce an extension of the definition of entropy  and  show the existence of $A$-maximizing measures. Moreover, we prove the existence of an involution kernel for $A$.  Finally, we build a Gibbsian specification for the Borelian sets on the set $\mathbb{R}^{\mathbb{N}}$ and we show that this family of probability measures satisfies a \emph{FKG}-inequality.

\vspace*{5mm}

{\footnotesize {\bf Keywords:} Equilibrium States, Gibbs States, Gibbsian Specifications, Ruelle-Perron-Frobenious Theorem, Thermodynamic Formalism, XY-Model.}

{\footnotesize {\bf Mathematics Subject Classification (2010):} 28Dxx, 37A35, 37D35.}

\vspace*{5mm}

\maketitle

\section{Introduction}

Thermodynamic formalism is a useful branch of mathematics, mainly in the rigorous study of some interesting problems of statistical mechanics and ergodic theory, which arise from the analysis of physical systems of particles, like molecules of water, noble gases and other type of fluids. These systems usually consist of a large number of elements, commonly of the order of $10^{27}$ elements. In some cases it is required  to study these type of problems when the  lattice is such  that in each site  the set of possible spins is unbounded. When the set of spins is countable several results are already known (see \cite{MR3864383}, \cite{BMP15}, \cite{MR2151222}, \cite{MR2279266}, \cite{MR2800665}, \cite{MR1942310} and section 5 in \cite{MR3377291} )

One of the principal tools used in this area is the Ruelle operator (also called transfer operator), which was introduced initially  by David Ruelle in \cite{MR0234697} as a instrument for the study of compact uni-dimensional lattices. This  has been generalized in several directions, for example, the setting of   \cite{MR2864625} and \cite{MR3377291}  for a compact framework and it was also studied by Mauldin, Urba\'nski and Sarig in \cite{MR2003772} and \cite{MR1738951} in a non-compact setting.

We will study here the case where the lattice is $\mathbb{R}^\mathbb{N}$. The assumptions (on the shift space or in the potential) which are usually considered in most of the cases which analyze the lattice $\mathbb{N}^\mathbb{N}$ (or, $\mathbb{Z}^\mathbb{N}$) are not natural on the present setting.

We will analyze here potentials $A : \mathbb{R}^\mathbb{N}\to \mathbb{R}$ which are Holder with respect to some natural metric. We point out that in order to study continuous potentials on lattices (with a countable set of sites),  such that  the fiber of spins is an unbounded set, some kind of constrictive assumption is needed (for instance, in order that some part of the initial probability  mass do no go to infinity under the dynamical evolution). Here the only technical assumption is in the potential (not related to conditions on  the symbolic space) and is just the Holder assumption on the potential  $A$ in its action on 
$\mathbb{R}^\mathbb{N}$.

Our analysis will be based on the Ruelle operator.

In order to define the Ruelle operator we need an a priori probability.
We will consider the following class of a priori probabilities:
given a continuous function $ f : \mathbb{R} \to \mathbb{R}$ which is  strictly positive and also satisfying the condition $\int_{\mathbb{R}}f(a)da = 1$, we take $d \nu = f(x)\, dx$ as the  priori probability measure

Among some of its utilities, the Ruelle operator allows the construction of Gibbs states, conformal probability measures and \emph{DLR}-Gibbs probability measures through the study of the behavior of its eigenvalues and eigenfunctions. The foregoing provides a good instrument for studying variational problems related with the existence and properties of equilibrium states through properties of linear operators. Nevertheless, the utilities of this operator permeate another important areas of mathematics; for example, W. Parry and M. Pollicott in \cite{MR1085356}  used this operator in order to exhibit important results on thermodynamic formalism of topological Markov chains with some important applications to complex analysis, geometry  and number theory.

In this work we are interested in studying behavior of $\alpha$-H\"older continuous potentials with domain both, in the set of sequences of real numbers and in the set of bi-sequences of real numbers.  This will help on the study of ergodic properties of Gibbs measures for a certain class of potentials using an adaptation of results considered in \cite{MR2864625} and \cite{MR3377291}. Besides that, using some of the features of the Ruelle operator, which is defined from an adequate a priori probability measure, we obtain certain tools that will help to show the existence of calibrated sub-actions, $A$-maximizing probability measures and ground states. Moreover, from properties of this operator, it is obtained a family of probability measures that satisfies the conditions of Gibbsian specification and an \emph{FKG}-inequality, which is an extension of results presented in \cite{MR2864625, MR3690296, CiLo17} which are now extended  (once more) to the  continuous unbounded space of spins. Due to the natural differentiable nature of $\mathbb{R}$ (and,  also - naturally - the one for the product $\mathbb{R}^\mathbb{N})$ we  are able to study some differentiable properties of the eigenfunctions of the Ruelle operator using an involution kernel - we will also consider  an example for  the case of Markov chains.

This paper is organized as follows.

In section \ref{Ruelle-operator-section} we show the existence of Gibbs states and conformal probability measures through properties of the spectrum of the Ruelle operator and its corresponding dual. Furthermore, using the above we build a calibrated sub-action and we present a definition of entropy that extend definitions presented in \cite{MR3377291}, \cite{MR2496111} and \cite{Mohr}. 

Moreover, we show that the given definition of entropy is suitable for the study of  the variational principle of pressure and we use this fact to show the existence of $A$-maximizing measures through a construction related to the existence of ground states associated to $A$. At the end of this section, under mild assumptions, we show that  there exists accumulation points in  the zero temperature limit in our non-compact setting.

In section \ref{Involution-kernel-section} we present  the concept of involution kernel in our setting and we show its existence. Besides that, we build an extension of the Gibbs states to the bilateral case in terms of the involution kernel and the conformal probability measure associated to the potential $A$ and its respective adjunct $A^*$. We also construct an example - for the case of stationary Markov probability measures - where we show some properties of differentiability of the eigenfunctions associated to the Ruelle operator in the case of locally constant potentials.

In section \ref{FKG-inequalities-section} we build a family of probability measures that generates a Gibbsian specification for the Borelian sets on $\mathbb{R}^{\mathbb{N}}$ and we demonstrate that this family of probability measures satisfies an \emph{FKG}-inequality under certain assumptions for the potential $A$. Besides that, from the classical approach of thermodynamic limit, we show that any thermodynamic limit of  probability measures belonging to the Gibbsian specification is in fact a \emph{DLR}-Gibbs probability measure and we also show that the Gibbs state and the conformal probability measure constructed in the section \ref{Ruelle-operator-section} are  \emph{DLR}-Gibbs probability measures.

\section{Ruelle operator}
\label{Ruelle-operator-section}

The Ruelle operator is one of the main tools used in Thermodynamical Formalism. This operator allows the construction of equilibrium states through an algebraic approach, which helps a lot  the study of variational principles for the topological pressure (among other important features). With this aim in this section we will build a Ruelle operator from a suitable a priori probability measure.

Let $a, b \in \mathbb{R}$, note that the map $(a, b) \mapsto \frac{1}{\pi} |\arctan(a)  - \arctan(b)|$ defines a metric on $\mathbb{R}$.

Points $x$ in $\mathbb{R}^\mathbb{N}$ are denoted by $x=(x_1,x_2,..,x_n,..).$

We consider on $\mathbb{R}^\mathbb{N}$ the metric

 \[
d(x, y) = \sum_{n \in \mathbb{N}} \frac{1}{\pi2^n} |\arctan(x_n) - \arctan(y_n)| \,.
\]

We will analyze here potentials $A : \mathbb{R}^\mathbb{N}\to \mathbb{R}$ which are Holder with respect to such metric.

By defining $\arctan(\pm\infty) = \lim_{x \to \pm\infty} \arctan(x)$ one can get  an extension of this metric to the set of extended real numbers defined as the one point compactification $\overline{\mathbb{R}} = \mathbb{R} \cup \{\pm \infty\}$. Moreover, the set $\overline{\mathbb{R}}$ equipped with this metric is a compact metric space. Therefore, follows from the Tychonoff's Theorem that $\overline{\mathbb{R}}^{\mathbb{N}} = \{(x_n)_{n \in \mathbb{N}}: x_n \in \overline{\mathbb{R}}, \forall n\}$ is a compact metric space equipped with the metric
\[
\widehat{d}(x, y) = \sum_{n \in \mathbb{N}} \frac{1}{\pi2^n} |\arctan(x_n)- \arctan(y_n)|\,.
\]

Indeed, note that for any basic open set in the product topology, which is of the form
\[
[U_1 \ldots U_k] = \prod_{n \in \mathbb{N}} U_n \,,
\]

with $U_n = \mathbb{R}$ for any $n > k$ and $U_1, \ldots, U_k$ open sets in $\mathbb{R}$, we get  that for each $x \in [U_1 \ldots U_k]$, there is $\epsilon_0 > 0$ such that

\[
B_{\widehat{d}}(x; \epsilon_0) \subset [U_1 \ldots U_k] \subset \bigcup_{x \in [U_1 \ldots U_k]} B_{\widehat{d}}(x; \epsilon_0) \,.
\]

Hereafter, we will denote by $\mathcal{C}_b(X)$ the set of bounded continuous functions from $X$ into $\mathbb{R}$. Consider the Lebesgue measure $d x$ on the Borel sigma-algebra of $\mathbb{R}$.

Fixing $f : \mathbb{R} \to \mathbb{R}$ an strictly positive continuous function satisfying $\int_{\mathbb{R}}f(a)da = 1$ and choosing $\nu = fdx$ as a priori probability measure, we define the Ruelle operator $\mathcal{L}_A$ associated to an $\alpha$-H\"older continuous potential $A : \mathbb{R}^{\mathbb{N}} \to \mathbb{R}$ as the map assigning to each $\varphi \in \mathcal{C}_b(\mathbb{R}^{\mathbb{N}})$ the function

\[
\mathcal{L}_A (\varphi) (x) = \int_{\mathbb{R}} e^{A(ax)}\varphi(ax)d\nu(a) = \int_{\mathbb{R}} e^{A(ax)}\varphi(ax)f(a)da \,.
\]

By the above definition, follows that for with $a^n = (a_n, \dots, a_1)$ and $S_nA(x) = \sum^{n-1}_{k=0}A(\sigma^n(x))$, the $n$-th iterative of the Ruelle operator is given by
\[
\mathcal{L}^n_A (\varphi) (x) = \int_{\mathbb{R}^n} e^{S_nA(a^nx)}\varphi(a^nx)f(a_n)\ldots f(a_1)da_1\ldots da_n \,.
\]

Furthermore, using that $\mathcal{L}_A$ is an operator from $\mathcal{C}_b(\mathbb{R}^{\mathbb{N}})$ into $\mathcal{C}_b(\mathbb{R}^{\mathbb{N}})$, it is possible to define the dual of the Ruelle operator, as the map from the set of regular additive finite Borel measures into itself, satisfying for any $\varphi \in \mathcal{C}_b(\mathbb{R}^{\mathbb{N}})$ the following equation
\[
\int_{\mathbb{R}^{\mathbb{N}}} \varphi d\left(\mathcal{L}^*_{A}\mu\right) = \int_{\mathbb{R}^{\mathbb{N}}} \mathcal{L}_{A}(\varphi) d\mu \,.
\]

Given a metric space $(X, d)$ we denote by $\mathcal{H}_{\alpha}(X)$ the set of $\alpha$-H\"older continuous functions from $X$ into $\mathbb{R}$. As usual, we will denote the H\"older constant of $A \in \mathcal{H}_{\alpha}(X)$ as $\mathrm{Hol}_{A} = \sup_{x \neq y}\frac{|A(x) - A(y)|}{d(x, y)^{\alpha}}$. It is widely known that $\mathcal{H}_{\alpha}(X)$ equipped with the norm $\|A\|_{\alpha} = \|A\|_{\infty} + \mathrm{Hol}_A$ is a Banach space. Besides that, given a continuous map $T : X \to X$ we are going to denote by $\mathcal{M}_T(X)$ the set of $T$-invariant Borel probability measures on $X$.

Following a similar procedure that in Proposition 4 of \cite{MR2864625}, it is possible showing that the image of any $\alpha$-H\"older continuous function by the Ruelle operator it is also $\alpha$-H\"older continuous, in other words, the restriction of $\mathcal{L}_A$ to $\mathcal{H}_{\alpha}(\mathbb{R}^{\mathbb{N}})$ is a map into $\mathcal{H}_{\alpha}(\mathbb{R}^{\mathbb{N}})$.

The following Lemma provides a useful tool which will help us  in getting most of properties in the case that we are studying in this work.

\begin{lemma}
Suppose that $A \in \mathcal{H}_{\alpha}(\mathbb{R}^{\mathbb{N}})$, then $A$ can be extended to a unique function $A' \in \mathcal{H}_{\alpha}(\overline{\mathbb{R}}^{\mathbb{N}})$.
\label{Holder}
\end{lemma}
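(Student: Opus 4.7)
The plan is to use the classical fact that a uniformly continuous function on a dense subset of a complete metric space extends uniquely to a uniformly continuous function on the whole space, and then to verify directly that the extension preserves the H\"older exponent and constant.

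First, I would check that $\mathbb{R}^{\mathbb{N}}$ is dense in $\overline{\mathbb{R}}^{\mathbb{N}}$ with respect to $\widehat{d}$. Given $y=(y_n)\in\overline{\mathbb{R}}^{\mathbb{N}}$, for each $k\geq 1$ I define $x^{(k)}=(x^{(k)}_n)$ by setting $x^{(k)}_n=y_n$ whenever $y_n\in\mathbb{R}$, and choosing $x^{(k)}_n\in\mathbb{R}$ to be a finite real number with $|\arctan(x^{(k)}_n)-\arctan(y_n)|<2^{-k}$ whenever $y_n=\pm\infty$ (which is possible by the definition $\arctan(\pm\infty)=\pm\pi/2$). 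Since the individual arctan differences are uniformly bounded by $\pi$, a direct tail estimate in the geometric series defining $\widehat{d}$ gives $\widehat{d}(x^{(k)},y)\to 0$ as $k\to\infty$. Moreover, by construction $\widehat{d}$ restricted to $\mathbb{R}^{\mathbb{N}}\times\mathbb{R}^{\mathbb{N}}$ coincides with $d$.

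Next, I would observe that $A\in\mathcal{H}_{\alpha}(\mathbb{R}^{\mathbb{N}})$ satisfies $|A(x)-A(y)|\leq \mathrm{Hol}_A\cdot d(x,y)^{\alpha}$, so $A$ is uniformly continuous on $\mathbb{R}^{\mathbb{N}}$. Since $\overline{\mathbb{R}}^{\mathbb{N}}$ is compact, hence complete, and contains $\mathbb{R}^{\mathbb{N}}$ as a dense subspace, the standard extension theorem for uniformly continuous maps into $\mathbb{R}$ yields a unique continuous extension $A':\overline{\mathbb{R}}^{\mathbb{N}}\to\mathbb{R}$. Explicitly, for $y\in\overline{\mathbb{R}}^{\mathbb{N}}$ and any sequence $(x^{(k)})\subset\mathbb{R}^{\mathbb{N}}$ with $x^{(k)}\to y$, set $A'(y):=\lim_k A(x^{(k)})$; the uniform continuity ensures that the limit exists and does not depend on the approximating sequence, which simultaneously gives existence and uniqueness of the extension.

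Finally, to see that $A'\in\mathcal{H}_{\alpha}(\overline{\mathbb{R}}^{\mathbb{N}})$ with the same H\"older constant, I would pick $y,y'\in\overline{\mathbb{R}}^{\mathbb{N}}$, choose approximating sequences $x^{(k)}\to y$ and $z^{(k)}\to y'$ in $\mathbb{R}^{\mathbb{N}}$, and pass to the limit in the inequality $|A(x^{(k)})-A(z^{(k)})|\leq \mathrm{Hol}_A\cdot\widehat{d}(x^{(k)},z^{(k)})^{\alpha}$, obtaining $|A'(y)-A'(y')|\leq \mathrm{Hol}_A\cdot\widehat{d}(y,y')^{\alpha}$. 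Hence $\mathrm{Hol}_{A'}\leq \mathrm{Hol}_A$ (in fact equality, since $A'$ extends $A$), finishing the proof. The only genuinely verification-heavy step is the density claim together with the compatibility between $d$ and $\widehat{d}$; once these are in hand, the rest is a textbook extension-by-uniform-continuity argument and I do not expect any serious obstacle.
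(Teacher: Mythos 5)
Your proposal is correct and follows essentially the same route as the paper: extend $A$ by taking limits along approximating sequences from the dense subset $\mathbb{R}^{\mathbb{N}}$, using uniform continuity (implied by the H\"older condition) to guarantee existence and independence of the limit, and then pass to the limit in the H\"older inequality to see that the constant and exponent are preserved. Your write-up is in fact somewhat more careful than the paper's, since you explicitly verify the density of $\mathbb{R}^{\mathbb{N}}$ in $\overline{\mathbb{R}}^{\mathbb{N}}$ and the compatibility of $d$ with $\widehat{d}$, which the paper takes for granted.
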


\begin{proof}
For any $x \in \overline{\mathbb{R}}^{\mathbb{N}}$ we define $A'(x) = \lim_{y \to x}A(y)$. Note that this limit exists because $A \in \mathcal{H}_{\alpha}(\mathbb{R}^{\mathbb{N}})$, moreover, it is finite since $A$ is a bounded potential. Then, for $x, y \in \overline{\mathbb{R}}^{\mathbb{N}}$ there exist sequences $(x^n)_{n \in \mathbb{N}}$ and $(y^m)_{m \in \mathbb{N}}$ taking values in $\mathbb{R}^{\mathbb{N}}$ such that $\lim_{n \in \mathbb{N}} x^n = x$ and $\lim_{m \in \mathbb{N}} z^m = z$, which implies
\[
|A'(x) - A'(y)| = \lim_{n\in\mathbb{N}}\lim_{m\in\mathbb{N}}|A(x^n) - A(x^m)| \leq \mathrm{Hol}_A\widehat{d}(x, y) \,.
\]

The above implies that $A' \in \mathcal{H}_{\alpha}(\overline{\mathbb{R}^{\mathbb{N}}})$. The uniqueness of $A'$ follows of the properties of limits using the perfectness of $\overline{\mathbb{R}^{\mathbb{N}}}$.
\end{proof}

\begin{corollary}
$\mathcal{H}_{\alpha}(\mathbb{R}^{\mathbb{N}})$ is isometrically isomorphic to $\mathcal{H}_{\alpha}(\overline{\mathbb{R}^{\mathbb{N}}})$.
\label{isometrically-isomorphic}
\end{corollary}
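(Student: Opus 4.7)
The plan is to exhibit the extension map $\Phi:\mathcal{H}_{\alpha}(\mathbb{R}^{\mathbb{N}})\to\mathcal{H}_{\alpha}(\overline{\mathbb{R}}^{\mathbb{N}})$ given by $\Phi(A)=A'$, where $A'$ is the extension supplied by Lemma \ref{Holder}, and verify that $\Phi$ is a linear bijection that preserves the $\alpha$-H\"older norm $\|\cdot\|_{\alpha}=\|\cdot\|_{\infty}+\mathrm{Hol}$. Linearity is immediate, since $A'(x)=\lim_{y\to x}A(y)$ commutes with finite linear combinations.

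For bijectivity I would argue density: $\mathbb{R}^{\mathbb{N}}$ is dense in $\overline{\mathbb{R}}^{\mathbb{N}}$, so a continuous function on $\overline{\mathbb{R}}^{\mathbb{N}}$ is determined by its restriction to $\mathbb{R}^{\mathbb{N}}$. This gives injectivity of $\Phi$ (if $A'\equiv 0$ then $A=A'|_{\mathbb{R}^{\mathbb{N}}}\equiv 0$) and also surjectivity: given $B\in\mathcal{H}_{\alpha}(\overline{\mathbb{R}}^{\mathbb{N}})$, the restriction $A:=B|_{\mathbb{R}^{\mathbb{N}}}$ lies in $\mathcal{H}_{\alpha}(\mathbb{R}^{\mathbb{N}})$ with $\mathrm{Hol}_{A}\leq\mathrm{Hol}_{B}$, and the continuity of $B$ forces $A'=B$ on $\overline{\mathbb{R}}^{\mathbb{N}}$.

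To verify that $\Phi$ is an isometry I would prove separately that $\|A\|_{\infty}=\|A'\|_{\infty}$ and $\mathrm{Hol}_{A}=\mathrm{Hol}_{A'}$. The inequality $\|A\|_{\infty}\leq\|A'\|_{\infty}$ is trivial since $A'$ extends $A$; the reverse follows because for any $x\in\overline{\mathbb{R}}^{\mathbb{N}}$ we can pick $x^{n}\in\mathbb{R}^{\mathbb{N}}$ with $x^{n}\to x$, so $|A'(x)|=\lim_{n}|A(x^{n})|\leq\|A\|_{\infty}$. An analogous approximation argument handles the H\"older seminorm: $\mathrm{Hol}_{A}\leq\mathrm{Hol}_{A'}$ is immediate from the extension being defined on a larger set, while for the reverse inequality, for arbitrary $x,y\in\overline{\mathbb{R}}^{\mathbb{N}}$ choose $x^{n},y^{m}\in\mathbb{R}^{\mathbb{N}}$ with $x^{n}\to x$, $y^{m}\to y$, and pass to the limit in
\[
|A(x^{n})-A(y^{m})|\leq\mathrm{Hol}_{A}\,\widehat{d}(x^{n},y^{m}),
\]
so that $|A'(x)-A'(y)|\leq\mathrm{Hol}_{A}\,\widehat{d}(x,y)$.

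I do not expect any serious obstacle: the statement is essentially a corollary of Lemma \ref{Holder} combined with the density of $\mathbb{R}^{\mathbb{N}}$ in $\overline{\mathbb{R}}^{\mathbb{N}}$, which was already used implicitly in the construction of $A'$. The only point requiring mild care is that the supremum defining the H\"older seminorm is taken over distinct pairs, so in the approximation step one should discard trivial identifications, but this is harmless since diagonal pairs contribute zero on both sides.
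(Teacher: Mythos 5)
Your proposal is correct and follows essentially the same route as the paper: both exhibit the extension map $A\mapsto A'$ from Lemma \ref{Holder}, check that it preserves $\|\cdot\|_{\infty}$ and $\mathrm{Hol}$, and deduce surjectivity from the uniqueness of the extension together with the density of $\mathbb{R}^{\mathbb{N}}$ in $\overline{\mathbb{R}}^{\mathbb{N}}$. The only difference is that you spell out the approximation arguments for the two norm equalities, which the paper simply asserts.
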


\begin{proof}
	We first observe that for each $A\in \mathcal{H}_{\alpha}(\mathbb{R}^{\mathbb{N}})$ we have that $\|A\|_{\infty} = \|A'\|_{\infty}$ and $\mathrm{Hol}_{A} =\mathrm{Hol}_{A'}$. Therefore, the identity map $I:\mathcal{H}_{\alpha}(\mathbb{R}^{\mathbb{N}})\to \mathcal{H}_{\alpha}(\overline{\mathbb{R}^{\mathbb{N}}})$
is a linear isometry and therefore an injective map. From the uniqueness and continuity of the extension provided by Lemma \ref{Holder} it follows
that $I$ is onto. Indeed, if $A\in \mathcal{H}_{\alpha}(\overline{\mathbb{R}^{\mathbb{N}}})$ then $A = (A|_{\mathbb{R}^{\mathbb{N}}})' = I(A|_{\mathbb{R}^{\mathbb{N}}})$.
\end{proof}

Using the Lemma \ref{Holder} and taking $B \in \mathcal{H}_{\alpha}(\overline{\mathbb{R}}^{\mathbb{N}})$, we can extend the definition of Ruelle operator to $\mathcal{H}_{\alpha}(\overline{\mathbb{R}}^{\mathbb{N}})$ in a natural way as the map assigning to each $\varphi \in \mathcal{H}_{\alpha}(\overline{\mathbb{R}}^{\mathbb{N}})$ the function
\[
\mathcal{L}_B (\varphi) (x) = \int_{\overline{\mathbb{R}}} e^{B(ax)}\varphi(ax)d\nu(a) = \int_{\mathbb{R}} e^{B(ax)}\varphi(ax)f'(a)da \,.
\]

Moreover in the following Lemma we will show that $\mathcal{L}_A$ is topologically conjugated to $\mathcal{L}_{A'}$ through the isometry defined in the Corollary \ref{isometrically-isomorphic}.

\begin{lemma}
Consider $A\in \mathcal{H}_{\alpha}(\mathbb{R}^{\mathbb{N}})$. Then $\mathcal{L}_{A'} \circ I = I \circ \mathcal{L}_{A}$, where $I : \mathcal{H}_{\alpha}(\mathbb{R}^{\mathbb{N}}) \to \mathcal{H}_{\alpha}(\overline{\mathbb{R}^{\mathbb{N}}})$ is the isometry provided by Corollary \ref{isometrically-isomorphic}.
\label{topological-conjugation}
\end{lemma}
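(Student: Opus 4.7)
The plan is to show the two operators agree on the dense subset $\mathbb{R}^{\mathbb{N}}\subset \overline{\mathbb{R}}^{\mathbb{N}}$ and then invoke continuity of both sides to conclude they coincide on the full compactification.

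First, I would fix $\varphi\in\mathcal{H}_{\alpha}(\mathbb{R}^{\mathbb{N}})$, write $\varphi' = I(\varphi)$, and evaluate both operators at an arbitrary $x\in\mathbb{R}^{\mathbb{N}}$. Since $A'$ (respectively $\varphi'$) was defined in Lemma \ref{Holder} to be the continuous extension of $A$ (respectively $\varphi$), both restrict to $A$ and $\varphi$ on the original space $\mathbb{R}^{\mathbb{N}}$. The a priori measure $\nu$ has density $f$ with respect to Lebesgue measure on $\mathbb{R}$, so its natural extension to $\overline{\mathbb{R}}$ assigns zero mass to $\{\pm\infty\}$. Consequently, for any $ax\in\mathbb{R}^{\mathbb{N}}$ (which is the case for $\nu$-a.e.\ $a$ when $x\in\mathbb{R}^{\mathbb{N}}$) the integrands in
\[
\mathcal{L}_{A'}(\varphi')(x) = \int_{\overline{\mathbb{R}}} e^{A'(ax)}\varphi'(ax)\,d\nu(a)
\quad\text{and}\quad
\mathcal{L}_A(\varphi)(x) = \int_{\mathbb{R}} e^{A(ax)}\varphi(ax)\,d\nu(a)
\]
coincide, so both numbers are equal. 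This establishes that $\mathcal{L}_{A'}(I(\varphi))$ and $I(\mathcal{L}_A(\varphi))$ agree as functions on $\mathbb{R}^{\mathbb{N}}$.

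Second, I would argue that both functions, viewed on $\overline{\mathbb{R}}^{\mathbb{N}}$, are continuous. For $I(\mathcal{L}_A(\varphi))$ this is immediate from the definition of $I$, which sends a Hölder function to its (continuous) Hölder extension. For $\mathcal{L}_{A'}(\varphi')$, it follows from the analogue of Proposition 4 of \cite{MR2864625} mentioned just before the lemma: the Ruelle operator defined over $\overline{\mathbb{R}}^{\mathbb{N}}$ with the extended potential $A'$ sends $\mathcal{H}_{\alpha}(\overline{\mathbb{R}}^{\mathbb{N}})$ into itself, so in particular $\mathcal{L}_{A'}(\varphi')$ is continuous on the compact space $\overline{\mathbb{R}}^{\mathbb{N}}$.

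Since $\mathbb{R}^{\mathbb{N}}$ is dense in $\overline{\mathbb{R}}^{\mathbb{N}}$ (by the very construction of $\overline{\mathbb{R}}$ as the one-point compactification attaching $\pm\infty$, together with the product topology), two continuous real-valued functions on $\overline{\mathbb{R}}^{\mathbb{N}}$ that agree on the dense set $\mathbb{R}^{\mathbb{N}}$ must coincide everywhere, yielding $\mathcal{L}_{A'}\circ I = I\circ\mathcal{L}_A$. The only subtle point is the first step: one must check that the extended a priori measure really charges no mass on $\{\pm\infty\}$ so that the integrals over $\overline{\mathbb{R}}$ and over $\mathbb{R}$ produce the same value — this is a one-line observation from the definition $d\nu = f\,dx$, but it is the sole place where the specific structure of $\nu$ is used.
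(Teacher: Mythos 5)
Your proposal is correct and follows essentially the same route as the paper: the central point in both arguments is that the extended a priori measure assigns no mass to $\{\pm\infty\}$, so the integrals over $\overline{\mathbb{R}}$ and over $\mathbb{R}$ coincide at every $x\in\mathbb{R}^{\mathbb{N}}$. The density-plus-continuity step you make explicit is exactly what the paper encodes implicitly by applying $I^{-1}$, that is, by the uniqueness of the H\"older extension from Lemma \ref{Holder}.
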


\begin{proof}
For any  $\varphi \in \mathcal{H}_{\alpha}(\mathbb{R}^{\mathbb{N}})$, and each $x \in \mathbb{R}^{\mathbb{N}}$ we have we have
\begin{align}
(I^{-1} \circ \mathcal{L}_{A'} \circ I)(\varphi)(x)
&= I^{-1} (\mathcal{L}_{A'} \circ I)(\varphi)(x) \nonumber \\
&= I^{-1} (\mathcal{L}_{A'}(I(\varphi)))(x) \nonumber \\
&= I^{-1} (\mathcal{L}_{A'}(\varphi'))(x) \nonumber \\
&= I^{-1} (\mathcal{L}_{A}(\varphi))(x) \nonumber \\
&= \mathcal{L}_{A}(\varphi)(x) \nonumber \,.
\end{align}

Note that the second last equality is a consequence of the assumption $\lim_{a \to \pm \infty}f(a)da = 0$ and the boundedness of $A$, which implies that
\[
0 \leq \left| \lim_{R \to +\infty}\int_{(-R, R)^c} e^{A'(ax)}\varphi'(ax)f'(a)da \right| \leq e^{\|A\|_{\infty}}\|\varphi\|_{\infty} \lim_{R \to +\infty}\int_{(-R, R)^c} f'(a)da = 0 \,.
\]
\end{proof}

Observe that compactness of $\overline{\mathbb{R}}^{\mathbb{N}}$ with the metric $\widehat{d}$, implies that results demonstrated in \cite{MR3377291} are valid for $A' \in \mathcal{H}_{\alpha}(\overline{\mathbb{R}}^{\mathbb{N}})$.

\begin{theorem}
Consider $A \in \mathcal{H}_{\alpha}(\mathbb{R}^{\mathbb{N}})$, then:
\begin{enumerate}[a)]
\item There is $\lambda_A > 0$, and $\psi_A : \mathbb{R}^{\mathbb{N}} \to \mathbb{R}$ an strictly positive $\alpha$-H\"older continuous function, such that $\mathcal{L}_A (\psi_A) (x) = \lambda_A \psi_A(x)$, for all $x \in \mathbb{R}^{\mathbb{N}}$. Moreover, the main eigenvalue is simple.
\item Defining $\bar{A} = A + \log(\psi_A) - \log(\psi_A \circ \sigma) - \log(\lambda_A)$, there is a unique fixed point $\mu_A$ of $\mathcal{L}^*_{\bar{A}}$ which belongs to $\mathcal{M}_\sigma(\mathbb{R}^{\mathbb{N}})$. This probability measure is called Gibbs state associated for $A$.
\item Choosing adequately the eigenfunction $\psi_A$, we have that $\rho_A = \frac{1}{\psi_A}d\mu_A$ is a probability measure satisfying $\mathcal{L}^*_{A}\rho_A = \lambda_A\rho_A$. This measure is called conformal probability measure for $A$.
\item For any $w \in \mathcal{H}_{\alpha}(\mathbb{R}^{\mathbb{N}})$ we have existence of the following uniform limits:
\[
\lim_{n \in \mathbb{N}} \mathcal{L}^n_{\bar{A}}w = \int_{\mathbb{R}^{\mathbb{N}}}w d\mu_A \,.
\]

and
\[
\lim_{n \in \mathbb{N}} \frac{\mathcal{L}^n_{A}w}{\lambda^n_A} = \psi_A\int_{\mathbb{R}^{\mathbb{N}}}w d\rho_A \,.
\]
\end{enumerate}
\label{Ruelle}
\end{theorem}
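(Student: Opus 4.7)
The plan is to reduce everything to the compactified setting $\overline{\mathbb{R}}^{\mathbb{N}}$, apply the Ruelle--Perron--Frobenius theorem for the compact case already available in \cite{MR3377291}, and transport the conclusions back to $\mathbb{R}^{\mathbb{N}}$ via the isometric isomorphism of Corollary \ref{isometrically-isomorphic} together with the intertwining relation of Lemma \ref{topological-conjugation}. So I would first set $A' := I(A)\in \mathcal{H}_{\alpha}(\overline{\mathbb{R}}^{\mathbb{N}})$ and invoke \cite{MR3377291} to obtain a simple positive leading eigenvalue $\lambda_{A'}$ with strictly positive H\"older eigenfunction $\psi_{A'}$, a unique $\sigma$-invariant fixed point $\mu_{A'}$ for $\mathcal{L}^*_{\bar{A}'}$, a conformal probability measure $\rho_{A'}$ for $\mathcal{L}^*_{A'}$, and the two uniform convergence statements of item (d) for $A'$.

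For parts (a) and (d) I would define $\lambda_A := \lambda_{A'}$ and $\psi_A := \psi_{A'}|_{\mathbb{R}^{\mathbb{N}}}$; strict positivity, H\"older regularity and simplicity all transfer immediately since $I$ is a bijective linear isometry of Banach spaces, and the intertwining relation $I\circ\mathcal{L}_A = \mathcal{L}_{A'}\circ I$ gives $\mathcal{L}_A\psi_A = \lambda_A\psi_A$ as an identity on $\mathbb{R}^{\mathbb{N}}$. The two uniform limits in (d) then follow by applying the corresponding compact statements to $w' := I(w)$ and restricting the resulting functions; provided the measures appearing in the limits can be viewed as probability measures on $\mathbb{R}^{\mathbb{N}}$, no additional work is required here.

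The main obstacle lies in parts (b) and (c), which require showing that $\mu_{A'}$ and $\rho_{A'}$ are concentrated on $\mathbb{R}^{\mathbb{N}}$ so that their restrictions are probability measures on the Borel sets of $\mathbb{R}^{\mathbb{N}}$. The key observation I would exploit is that the extension $f'$ of the density $f$ is absolutely continuous with respect to Lebesgue measure on $\overline{\mathbb{R}}$ and therefore assigns mass zero to the two-point boundary $\{\pm\infty\}$. Iterating the fixed-point identity $\mu_{A'} = \mathcal{L}^*_{\bar{A}'}\mu_{A'}$ a suitable number of times and approximating the indicator of the closed cylinder $[x_n = \pm\infty]$ from above by a decreasing sequence of continuous functions, one obtains $\mu_{A'}([x_n = \pm\infty]) = 0$ for every $n$, because after the relevant iterations the integrand reduces to an integral of $f'$ over a Lebesgue-null set in one of the prefix coordinates. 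A countable union over $n$ and the two signs then yields $\mu_{A'}(\overline{\mathbb{R}}^{\mathbb{N}}\setminus\mathbb{R}^{\mathbb{N}}) = 0$, and the same strategy applied to $\mathcal{L}^*_{A'}\rho_{A'} = \lambda_{A'}\rho_{A'}$ handles $\rho_{A'}$. Once this concentration is established I would define $\mu_A$ and $\rho_A$ as the corresponding restrictions, normalize $\psi_A$ so that $\int \psi_A^{-1}\, d\mu_A = 1$ in order to guarantee that $\rho_A = \psi_A^{-1}\mu_A$ is a probability measure, and verify the duality equations on $\mathbb{R}^{\mathbb{N}}$ via Lemma \ref{topological-conjugation}; the $\sigma$-invariance of $\mu_A$ and the uniqueness claims for both measures are then inherited directly from the compact case.
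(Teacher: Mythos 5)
Your proposal is correct and follows essentially the same route as the paper: reduce to the compactification $\overline{\mathbb{R}}^{\mathbb{N}}$, transfer items (a) and (d) through the isometry of Corollary \ref{isometrically-isomorphic} and the intertwining relation of Lemma \ref{topological-conjugation}, and obtain (b)--(c) by showing the boundary $\overline{\mathbb{R}}^{\mathbb{N}}\setminus\mathbb{R}^{\mathbb{N}}$ is null, using continuous majorants of the indicators of $\{x : x_n=\pm\infty\}$ together with the fact that the a priori density assigns vanishing mass to the complement of $(-R,R)$ as $R\to\infty$. The only cosmetic differences are that the paper establishes the null-mass claim for the first coordinate alone and propagates it to all coordinates by $\sigma$-invariance of $\mu_{A'}$, and that it deduces the concentration of $\rho_{A'}$ from that of $\mu_{A'}$ via $\rho_A=\psi_A^{-1}\,d\mu_A$ rather than running the boundary argument a second time.
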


\begin{proof}
By Lemma \ref{Holder}, there is an $\alpha$-H\"older continuous extension $A'$ of $A$ defined on the set of extended real numbers $\overline{\mathbb{R}}^{\mathbb{N}}$. Moreover, by Theorem $2$ in \cite{MR3377291} the items $a)$, $b)$, $c)$ and $d)$ of this Theorem are valid for $A'$.

Now, taking $\lambda_A = \lambda_{A'}$ and $\psi_A = \psi_{A'}|_{\mathbb{R}^{\mathbb{N}}}$, we obtain item $a)$ of this Theorem. Indeed, by Lemma \ref{topological-conjugation} we have $\mathcal{L}_{A'} \circ I = I \circ \mathcal{L}_A$, which implies that
\begin{align}
\mathcal{L}_A(\psi_A)
&= (I^{-1} \circ \mathcal{L}_{A'} \circ I)(\psi_A) \nonumber \\
&= (I^{-1} \circ \mathcal{L}_{A'} \circ I)(\psi_{A'}|_{\mathbb{R}^{\mathbb{N}}}) \nonumber \\
&= (I^{-1} \circ \mathcal{L}_{A'})(\psi_{A'}) \nonumber \\
&= \lambda_{A'}I^{-1}(\psi_{A'}) \nonumber \\
&= \lambda_{A'}\psi_A \nonumber \,.
\end{align}

Therefore, in the process of showing  the remaining items, it is enough to show that
\begin{equation}
\mu_{A'}(\overline{\mathbb{R}}^{\mathbb{N}} \setminus \mathbb{R}^{\mathbb{N}}) = 0 \,.
\label{tail}
\end{equation}

Indeed, under the assumption that (\ref{tail}) is valid, we can define $\mu_A = \mu_{A'}|_{\mathbb{R}^{\mathbb{N}}}$, in other words, the probability measure assigning to any Borelian set $E' \subset \overline{\mathbb{R}^{\mathbb{N}}}$, the value
\[
\mu_{A'}|_{\mathbb{R}^{\mathbb{N}}}(E') = \frac{\mu_{A'}(E' \cap \mathbb{R}^{\mathbb{N}})}{\mu_{A'}({\mathbb{R}^{\mathbb{N}}})} = \mu_{A'}(E' \cap \mathbb{R}^{\mathbb{N}}) \,.
\]

Therefore $\mu_A(E) = \mu_{A'}(E \cap \mathbb{R}^{\mathbb{N}}) = \mu_{A'}(E)$ for any Borelian set $E \subset \mathbb{R}^{\mathbb{N}}$, which implies that $\mu_A$ satisfies the items $b)$, $c)$ and $d)$ of this Theorem.

Now, observe that $\mu_{A'}(\{x \in \overline{\mathbb{R}}^{\mathbb{N}} : x_1 = \pm \infty\}) = 0$ implies (\ref{tail}). Now it follows from  invariance of $\mu_{A'}$ regarding the map $\sigma$ and the above condition  that $\mu_{A'}(\{x \in \overline{\mathbb{R}}^{\mathbb{N}} : x_n = \pm \infty\}) = 0$, for all $n \in \mathbb{N}$. Therefore,
\[
\mu_{A'}(\overline{\mathbb{R}}^{\mathbb{N}} \setminus \mathbb{R}^{\mathbb{N}}) \leq \sum_{n \in \mathbb{N}} \mu_{A'}(\{x \in \overline{\mathbb{R}}^{\mathbb{N}} : x_n = \pm \infty\}) = 0 \,.
\]

In order to demonstrate that $\mu_{A'}(\{x \in \overline{\mathbb{R}}^{\mathbb{N}} : x_1 = \pm \infty\}) = 0$ we will use the following procedure.

Fixing $R > 0$, we can choose an $\alpha$-H\"older continuous function $w_R : \overline{\mathbb{R}}^{\mathbb{N}} \to [0, 1]$ that satisfies the following conditions:
\begin{enumerate}[i)]
\item $\chi_{\{x \in \overline{\mathbb{R}}^{\mathbb{N}} : x_1 = \pm \infty\}}(x) \leq w_R(x) \leq 1$.
\item $w_R(x) = 0$, if $x \in (-R, R)$.
\end{enumerate}
Then, we have
\begin{align}
&\mu_{A'}(\{x \in \overline{\mathbb{R}}^{\mathbb{N}} : x_1 = \pm \infty\}) \nonumber \\
&= \int_{\overline{\mathbb{R}}^{\mathbb{N}}}\chi_{\{x \in \overline{\mathbb{R}}^{\mathbb{N}} : x_1 = \pm \infty\}} d\mu_{A'} \nonumber \\
&\leq \int_{\overline{\mathbb{R}}^{\mathbb{N}}} w_R d\mu_{A'} \nonumber \\
&= \lim_{n \in \mathbb{N}} \mathcal{L}^{n}_{\overline{A'}} (w_R)(x) \nonumber \\
&= \lim_{n \in \mathbb{N}} \int_{\overline{\mathbb{R}}^n} e^{S_n\overline{A'}(a^nx)}w_R(a^nx)f(a_n)\ldots f(a_1)da_1\ldots da_n \nonumber \\
&= \lim_{n \in \mathbb{N}} \int_{(-R, R)^c} e^{\overline{A'}(a_nx)} \nonumber \\
& \ \ \ \ \left(\int_{\overline{\mathbb{R}}^{n-1}} e^{S_{n-1}\overline{A'}(a^{n-1}x)}f(a_{n-1})\ldots f(a_1)da_1\ldots da_{n-1}\right)f(a_n)da_n \nonumber \\
&= \lim_{n \in \mathbb{N}} \int_{(-R, R)^c} e^{\overline{A'}(a_nx)}f(a_n)da_n \leq e^{\|\overline{A'}\|}\int_{(-R, R)^c} f(a)da \,. \nonumber
\end{align}
Taking the limit when $R \to +\infty$ we get our claim.
\end{proof}

Note that under the hypothesis of the above Theorem, for any $\beta > 0$ we have that $\lambda_{\beta A}$, $\psi_{\beta A}$, $\mu_{\beta A}$, and $\rho_{\beta A}$ are well defined, and satisfy the same conclusions of the Theorem. The following Lemma shows a bound for the family of logarithms of the eigenvalues associated to the family $(\beta A)_{\beta > 0}$. This bound will be useful in the proofs of the results that appear below.

\begin{lemma}
If $A \in \mathcal{H}_{\alpha}(\mathbb{R}^{\mathbb{N}})$, then for any $\beta > 0$ we have
\[
-\|A\| \leq \frac{1}{\beta}\log(\lambda_{\beta A}) \leq \|A\| \,.
\]
\end{lemma}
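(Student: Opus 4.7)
The plan is to apply the eigenvalue equation $\mathcal{L}_{\beta A}\psi_{\beta A} = \lambda_{\beta A}\psi_{\beta A}$ evaluated at points where the eigenfunction attains its extrema, combined with the trivial bounds $e^{-\beta\|A\|_\infty}\leq e^{\beta A(ax)}\leq e^{\beta\|A\|_\infty}$ and the normalization $\int_{\mathbb{R}} f(a)\,da = 1$.

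First I would transfer the whole problem to the compactification. By Lemma \ref{Holder} and Corollary \ref{isometrically-isomorphic} the potential $\beta A$ extends uniquely to $(\beta A)' = \beta A' \in \mathcal{H}_\alpha(\overline{\mathbb{R}}^{\mathbb{N}})$, and by Lemma \ref{topological-conjugation} together with the proof of Theorem \ref{Ruelle} the eigenvalue $\lambda_{\beta A}$ coincides with $\lambda_{\beta A'}$, while $\psi_{\beta A}$ is the restriction of the strictly positive continuous eigenfunction $\psi_{\beta A'}$ on the compact space $\overline{\mathbb{R}}^{\mathbb{N}}$. Compactness guarantees that there exist $x^+, x^- \in \overline{\mathbb{R}}^{\mathbb{N}}$ with
\[
M := \psi_{\beta A'}(x^+) = \sup \psi_{\beta A'}, \qquad m := \psi_{\beta A'}(x^-) = \inf \psi_{\beta A'} > 0.
\]

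Next, I would evaluate the eigenvalue equation for the extended operator at $x^+$. Using $\psi_{\beta A'}(ax^+) \leq M$ and $\|A'\|_\infty = \|A\|_\infty$,
\[
\lambda_{\beta A}\, M \;=\; \int_{\overline{\mathbb{R}}} e^{\beta A'(ax^+)}\psi_{\beta A'}(ax^+)\, f'(a)\, da \;\leq\; e^{\beta\|A\|_\infty}\, M \int_{\overline{\mathbb{R}}} f'(a)\,da \;=\; e^{\beta\|A\|_\infty}\, M,
\]
whence $\lambda_{\beta A} \leq e^{\beta\|A\|_\infty}$ and $\tfrac{1}{\beta}\log\lambda_{\beta A} \leq \|A\|_\infty$. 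Symmetrically, evaluating at $x^-$ and using $\psi_{\beta A'}(ax^-) \geq m$ yields
\[
\lambda_{\beta A}\, m \;\geq\; e^{-\beta\|A\|_\infty}\, m,
\]
so $\tfrac{1}{\beta}\log\lambda_{\beta A} \geq -\|A\|_\infty$. Since $\|A\|_\infty \leq \|A\|_\alpha$, the required two-sided bound follows (in either interpretation of $\|A\|$).

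There is no serious obstacle. The only point deserving attention is that the supremum and infimum of $\psi_{\beta A}$ might fail to be attained on the non-compact space $\mathbb{R}^{\mathbb{N}}$, which is precisely why the passage to the compactification $\overline{\mathbb{R}}^{\mathbb{N}}$ via Lemma \ref{Holder} is essential; once one is on a compact space, the argument reduces to the classical Birkhoff-type bound obtained by testing the eigenfunction equation at its extremal points.
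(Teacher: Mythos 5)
Your proof is correct and follows essentially the same route as the paper: test the eigenvalue equation at the extrema of the eigenfunction, bound $e^{\beta A}$ by $e^{\pm\beta\|A\|_{\infty}}$, and use $\int_{\mathbb{R}}f(a)\,da=1$. The only cosmetic difference is that you attain the extrema exactly on the compactification $\overline{\mathbb{R}}^{\mathbb{N}}$, while the paper stays in $\mathbb{R}^{\mathbb{N}}$ and uses approximate extremizers $\tilde{x}^n$ with $\psi_{\beta A}(\tilde{x}^n)<\inf(\psi_{\beta A})+1/n$ before passing to the limit.
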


\begin{proof}
Let $A' : \overline{\mathbb{R}}^{\mathbb{N}} \to \mathbb{R}$ be the $\alpha$-H\"older continuous extension of $A$, then $\inf(\psi_{\beta A}) = \min(\psi_{\beta A'})$, and $\sup(\psi_{\beta A}) = \max(\psi_{\beta A'})$. The foregoing implies that for any $n \in \mathbb{N}$ there exists $\tilde{x}^n \in \mathbb{R}^{\mathbb{N}}$, such that $\psi_{\beta A}(\tilde{x}^n) < \inf(\psi_{\beta A}) + 1/n$. Therefore, using that $\mathcal{L}_{\beta A}(\psi_{\beta A})(\tilde{x}^n) = \lambda_{\beta A}\psi_{\beta A}(\tilde{x}^n)$, follows
\begin{align}
\lambda_{\beta A}
= \frac{1}{\psi_{\beta A}(\tilde{x}^n)}\mathcal{L}_{\beta A}(\psi_{\beta A})(\tilde{x}^n)
&= \frac{1}{\psi_{\beta A}(\tilde{x}^n)}\int_{\mathbb{R}} e^{\beta A(a\tilde{x}^n)}\psi_{\beta A} (a\tilde{x}^n)f(a)da \nonumber \\
&> \frac{1}{\inf(\psi_{\beta A}) + 1/n}\int_{\mathbb{R}} e^{\beta A(a\tilde{x}^n)}\psi_{\beta A} (a\tilde{x}^n)f(a)da \nonumber \\
&> \frac{1}{\inf(\psi_{\beta A})}\int_{\mathbb{R}} e^{\beta A(a\tilde{x}^n)}\psi_{\beta A} (a\tilde{x}^n)f(a)da \nonumber \,.
\end{align}
Therefore, we have
\[
\lambda_{\beta A} \geq \int_{\mathbb{R}} e^{\beta A(a\tilde{x}^n)} f(a)da \geq e^{-\beta\|A\|} \,.
\]

Using a similar argument, it is demonstrated that $\lambda_{\beta A} \leq e^{\beta\|A\|}$, which concludes the proof.
\end{proof}

One of the main problems in non-compact setting  is existence or not of maximizing measures. Nevertheless, the next Theorem provides conditions in which there exists  a maximizing measure for an $\alpha$-H\"older continuous potential.

\begin{theorem}
Set $A \in \mathcal{H}_{\alpha}(\mathbb{R}^{\mathbb{N}})$. If there exists $z_0 \in \mathbb{R}$ such that the extension $A' \in \mathcal{H}_{\alpha}(\overline{\mathbb{R}}^{\mathbb{N}})$ satisfies
\begin{equation}
A'(x_1, \ldots, x_{n-1}, \pm\infty, x_{n+1}, \ldots) < A'(x_1, \ldots, x_{n-1}, z_0, x_{n+1}, \ldots) \,,
\label{decay}
\end{equation}

for all $x \in \overline{\mathbb{R}}^{\mathbb{N}}$ and all $n \in \mathbb{N}$, then:
\begin{enumerate}[a)]
\item Any maximizing measure $\mu_{\infty}$ of $A'$ has support contained in $\mathbb{R}^{\mathbb{N}}$, therefore $\mu_{\infty}$ is a maximizing measure of $A$.
\item $A$ has a calibrated sub-action $V$ defined on $\mathbb{R}^{\mathbb{N}}$, in other words, $V$ is a continuous function that satisfies
\[
m(A) = \max_{a \in \mathbb{R}}\{A(ax) + V(ax) - V(x)\} \,.
\]

\item $\lim_{\beta \to \infty}\frac{1}{\beta}\log(\lambda_{\beta A}) = m(A)$.
\end{enumerate}
\label{calibrated-sub-action}
\end{theorem}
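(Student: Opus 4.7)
The plan is to reduce each of a), b), c) to its compact-space analogue for the H\"older extension $A'\in\mathcal{H}_{\alpha}(\overline{\mathbb{R}}^{\mathbb{N}})$ provided by Lemma \ref{Holder}, where Theorem 2 of \cite{MR3377291} and the classical zero-temperature asymptotics apply; the conclusions are then transferred back to $\mathbb{R}^{\mathbb{N}}$ by invoking the strict dominance hypothesis (\ref{decay}) to keep mass (and witnesses of calibration) away from $\pm\infty$.

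For a), the key device is the coordinate-wise Borel ``collapse'' map $T\colon\overline{\mathbb{R}}^{\mathbb{N}}\to\overline{\mathbb{R}}^{\mathbb{N}}$ defined by $(Tx)_n=z_0$ whenever $x_n=\pm\infty$ and $(Tx)_n=x_n$ otherwise. Since $T$ acts coordinate-wise, $T\circ\sigma=\sigma\circ T$, so $T_*$ preserves $\sigma$-invariance and sends every probability measure to one supported in $\mathbb{R}^{\mathbb{N}}$. Applying (\ref{decay}) one coordinate at a time through the finite partial replacements $T^{(N)}$, together with monotonicity of $A'(T^{(N)}x)$ in $N$ and continuity of $A'$, gives the strict pointwise inequality $A'(Tx)>A'(x)$ on $F:=\overline{\mathbb{R}}^{\mathbb{N}}\setminus\mathbb{R}^{\mathbb{N}}$. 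Thus if $\mu_\infty$ were a maximizing measure for $A'$ with $\mu_\infty(F)>0$, the $\sigma$-invariant measure $T_*\mu_\infty$ would satisfy $\int A'\,d(T_*\mu_\infty)>m(A')$, contradicting maximality. Hence $\mu_\infty(F)=0$ and $\mu_\infty$ restricts to an $A$-maximizing measure on $\mathbb{R}^{\mathbb{N}}$; in particular $m(A)=m(A')$. Item c) then follows immediately from Lemma \ref{topological-conjugation} (which gives $\lambda_{\beta A}=\lambda_{\beta A'}$ for every $\beta>0$) combined with the compact-case zero-temperature limit $\tfrac{1}{\beta}\log\lambda_{\beta A'}\to m(A')$ and the equality $m(A)=m(A')$ just established.

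For b), I would take a continuous calibrated sub-action $V'$ for $A'$ on $\overline{\mathbb{R}}^{\mathbb{N}}$ supplied by the compact-case theory and set $V:=V'|_{\mathbb{R}^{\mathbb{N}}}$; by continuity of $a\mapsto A'(ax)+V'(ax)-V'(x)$ at $\pm\infty$ and by a), one gets $\sup_{a\in\mathbb{R}}\{A(ax)+V(ax)-V(x)\}=m(A')=m(A)$. The main obstacle is to upgrade this supremum to a genuine maximum attained at some $a\in\mathbb{R}$ for every $x\in\mathbb{R}^{\mathbb{N}}$. My plan is a proof by contradiction: if for some $x\in\mathbb{R}^{\mathbb{N}}$ the maximum over $\overline{\mathbb{R}}$ were attained only at $\pm\infty$, iterate the calibration equation backwards to build a preorbit $(y_n)$ of $x$ whose first coordinate is forced to be infinite at infinitely many steps, and form the empirical measures of the shifted orbits. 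A weak-$*$ accumulation point is a $\sigma$-invariant $A'$-maximizing probability, and one must argue, via $\sigma$-invariance and (\ref{decay}), that it places positive mass on the \emph{closed} set $\{x:x_1=\pm\infty\}$, contradicting a). The delicate technical point is engineering the preorbit so that a positive asymptotic fraction of its iterates visits $\{x_1=\pm\infty\}$, since $F$ itself is not closed in $\overline{\mathbb{R}}^{\mathbb{N}}$ and an infinite coordinate can drift to later positions and dissolve in the weak-$*$ limit.
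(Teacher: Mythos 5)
Your treatment of parts a) and c) is correct, and for a) it is genuinely different from the paper's. The paper proves a) \emph{last}, as a corollary of the calibrated sub-action: it shows that at the first infinite coordinate of any $x\in\overline{\mathbb{R}}^{\mathbb{N}}\setminus\mathbb{R}^{\mathbb{N}}$ the defect $A'+V'-V'\circ\sigma-m(A')$ is strictly negative, so such points cannot lie in the support of a maximizing measure. Your coordinate-wise collapse map $T$ gives a self-contained argument: $T$ is Borel, commutes with $\sigma$, and the monotone finite replacements $T^{(N)}$ together with continuity of $A'$ do yield $A'(Tx)>A'(x)$ on $F$ (the limit dominates the value at the first replacement, which is already strictly larger by (\ref{decay})), so $\mu_\infty(F)>0$ would give the $\sigma$-invariant measure $T_*\mu_\infty$ a strictly larger integral. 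This buys you a) and the equality $m(A)=m(A')$ without any sub-action, and c) then follows exactly as in the paper.

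Part b), however, has a genuine gap, and it is precisely the one you flag. The paper does not work with an arbitrary calibrated sub-action: it takes $V'=\lim_{n}\frac{1}{\beta_n}\log(\psi_{\beta_n A'})$ (Arzel\`a--Ascoli plus Proposition 10 of \cite{MR3377291}) and transfers the hypothesis (\ref{decay}) to the eigenfunctions through the integral representation $\rho_{\beta A'}(\overline{\mathbb{R}}^{\mathbb{N}})\,\psi_{\beta A'}(x)=\lim_{n}\int_{\overline{\mathbb{R}}^{n}}e^{S_n\beta A'(a^nx)-n\log(\lambda_{\beta A'})}f(a_n)\cdots f(a_1)\,da_1\cdots da_n$, obtaining $\psi_{\beta A'}(\pm\infty\,x)\le\psi_{\beta A'}(z_0\,x)$ and hence $V'(\pm\infty\,x)\le V'(z_0\,x)$. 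Combined with the \emph{strict} inequality (\ref{decay}), this shows $A'(\pm\infty\,x)+V'(\pm\infty\,x)<A'(z_0\,x)+V'(z_0\,x)$, so the maximum over the compact set $\overline{\mathbb{R}}$ is never attained only at $\pm\infty$ and is therefore a genuine maximum over $\mathbb{R}$. For a generic calibrated sub-action this monotonicity at $\pm\infty$ is not automatic, and your fallback via preorbits and empirical measures does not close the gap: the hypothesis to be contradicted concerns a single point $x$ at which calibration is attained only at $a=\pm\infty$, which forces only the \emph{first} point of the constructed preorbit into $\{y:y_1=\pm\infty\}$; all subsequent backward steps may be realized by finite symbols, so the empirical measures need not charge that closed set in the weak-$*$ limit and no contradiction with a) results. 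As written, b) is not proved; the missing idea is the specific choice of $V'$ as a limit of normalized logarithms of eigenfunctions together with the eigenfunction monotonicity inherited from (\ref{decay}).
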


\begin{proof}
Consider $A'$ the $\alpha$-H\"older continuous extension of $A$ to the set $\overline{\mathbb{R}}^{\mathbb{N}}$, using that $\frac{1}{\beta}\log(\psi_{\beta A'})$ is $\alpha$-H\"older continuous with constant $\frac{2^{\alpha}}{2^{\alpha}-1}\mathrm{Hol}_A$ for all $\beta > 0$, follows that the family $(\frac{1}{\beta}\log(\psi_{\beta A'}))_{\beta>0}$ is equi-continuous and uniformly bounded, therefore, by Arzela-Ascoli's Theorem, there exists a convergent sub-sequence $(\frac{1}{\beta_n}\log(\psi_{\beta_n A'}))_{n \in \mathbb{N}}$ and, by Proposition 10 in \cite{MR3377291}, $V' = \lim_{n \in \mathbb{N}}\frac{1}{\beta_n}\log(\psi_{\beta_n A'})$ is a calibrated sub-action for $A'$, in other words, for all $x \in \overline{\mathbb{R}}^{\mathbb{N}}$ is satisfied
\[
m(A') = \max_{a\in\overline{\mathbb{R}}}\{A'(ax) + V'(ax) - V'(x)\} \,.
\]

Moreover, a similar result in \cite{MR3377291}, guarantees that $\lim_{\beta \to \infty}\frac{1}{\beta}\log(\lambda_{\beta A'}) = m(A')$, therefore $\lim_{\beta \to \infty}\frac{1}{\beta}\log(\lambda_{\beta A}) = m(A')$.

Now, using (\ref{decay}), we obtain the following inequalities for each $x \in \overline{\mathbb{R}}^{\mathbb{N}}$
\begin{enumerate}[i)]
\item $\psi_{\beta A'}(\pm \infty, x_1, x_2, \ldots) \leq \psi_{\beta A'}(z_0, x_1, x_2, \ldots)$.
\item $V'(\pm \infty, x_1, x_2, \ldots) \leq V'(z_0, x_1, x_2, \ldots)$
\end{enumerate}
Indeed, since for all $x \in \overline{\mathbb{R}}^{\mathbb{N}}$
\begin{equation}
\rho_{\beta A'}(\overline{\mathbb{R}}^{\mathbb{N}})\psi_{\beta A'}(x) = \lim_{n \in \mathbb{N}}\int_{\overline{\mathbb{R}}^n} e^{S_n\beta A'(a^nx) - n\log(\lambda_{\beta A'})}f(a_n)\ldots f(a_1)da_1 \ldots da_n \,.
\label{limit}
\end{equation}

and the points $y = (\pm\infty x)$ and $z = (z_0x)$ satisfy for each $n \in \mathbb{N}$ the condition
\[
\int_{\overline{\mathbb{R}}^n} e^{S_n\beta A'(a^n y)}f(a_n)\ldots f(a_1)da_1 \ldots da_n < \int_{\overline{\mathbb{R}}^n} e^{S_n\beta A'(a^n z)}f(a_n)\ldots f(a_1)da_1 \ldots da_n \,.
\]

Then, it follows from (\ref{limit}), that $\psi_{\beta A'}(y) \leq \psi_{\beta A'}(z)$, therefore using that $V' = \lim_{n \in \mathbb{N}}\frac{1}{\beta_n}\log(\psi_{\beta_n A'})$, we obtain $V'(y) \leq V'(z)$. The last inequality joint with (\ref{decay}) implies that for all $x \in \overline{\mathbb{R}}^{\mathbb{N}}$
\[
A'(\pm\infty x) + V'(\pm\infty x) - V'(x) < A'(z_0 x) + V'(z_0 x) - V'(x) \,.
\]

Therefore, we have
\[
\max_{a\in\overline{\mathbb{R}}}\{A'(ax) + V'(ax) - V'(x)\} = \max_{a\in\mathbb{R}}\{A'(ax) + V'(ax) - V'(x)\} \,.
\]

In other words, using that $A = A'|_{\mathbb{R}^{\mathbb{N}}}$, and taking $V = V'|_{\mathbb{R}^{\mathbb{N}}}$, it follows from the properties of the calibrated sub-action of $V'$, that for all $x \in \mathbb{R}^{\mathbb{N}}$
\[
m(A') = \max_{a\in\mathbb{R}}\{A(ax) + V(ax) - V(x)\} \,.
\]

Thus, in the process of finalizing the proof, we only need to demonstrate that $m(A) = m(A')$, and for that, it is enough showing that any $A'$-maximizing measure it is supported in $\mathbb{R}^{\mathbb{N}}$.

Indeed, observe that any $A'$-maximizing probability measure $\mu_{\infty}$ satisfies
\[
\int_{\overline{\mathbb{R}}^{\mathbb{N}}}(A' + V' - V' \circ \sigma - m(A'))d\mu_{\infty} = 0 \,,
\]
then, taking in account that $A' + V' - V' \circ \sigma - m(A')$ is less than or equal to zero and continuous, it follows that this function vanishes at the support of $\mu_{\infty}$.

Now observe that, for $x \in \overline{\mathbb{R}}^{\mathbb{N}} \setminus \mathbb{R}^{\mathbb{N}}$ there is $k = k(x) \in \mathbb{N}$, such that, $x_k = \pm\infty$, and $x_l \neq \pm\infty$ for $1 \leq l < k$, then taking $y$ defined by $y_i = x_i$ for all $i \neq k$ and $y_k = z_0$, it follows that
\begin{align}
&A'(\sigma^{k-1}(x)) + V'(\sigma^{k-1}(x)) - V'(\sigma^k(x)) - m(A') \nonumber \\
&< A'(\sigma^{k-1}(y)) + V'(\sigma^{k-1}(y)) - V'(\sigma^k(y)) - m(A') \leq 0\nonumber \,,
\end{align}

which implies that $x$ does not belongs to the support of any maximizing measure.
\end{proof}

From now on, we will denote the set of Gibbs states associated to the a priori probability measure $\nu = fdx$ as $\mathcal{G}$, in other words, the set of $\mu \in \mathcal{M}_{\sigma}(\mathbb{R}^{\mathbb{N}})$, such that, $\mathcal{L}^*_B(\mu) = \mu$ for some $\alpha$-H\"older normalized potential $B
$. In this case we define the entropy of $\mu \in \mathcal{G}$ as
\begin{equation}
h(\mu) = -\int_{\mathbb{R}^{\mathbb{N}}}B d\mu \,.
\label{entropy-Gibbs}
\end{equation}

In particular, when $B = \bar{A}$ for some $\alpha$-H\"older continuous potential $A$, we have
\begin{align}
h(\mu_A)
&= -\int_{\mathbb{R}^{\mathbb{N}}}\bar{A} d\mu_A \nonumber \\
&= -\int_{\mathbb{R}^{\mathbb{N}}}A + \log(\psi_A) - \log(\psi_A \circ \sigma) - \log(\lambda_A)d\mu_A \nonumber \\
&= -\int_{\mathbb{R}^{\mathbb{N}}}A d\mu_A + \log(\lambda_A) \nonumber \,.
\end{align}

We can extend (\ref{entropy-Gibbs}) for the set $\mathcal{M}_{\sigma}(\mathbb{R}^{\mathbb{N}})$ in the following way
\begin{equation}
h(\mu) = \inf_{u \in \mathcal{C}^+_b(\mathbb{R}^{\mathbb{N}})}\left\{ \int_{\mathbb{R}^{\mathbb{N}}} \log\left(\frac{\mathcal{L}_0(u)}{u}\right) d\mu \right\} \,.
\label{entropy}
\end{equation}

With $\mathcal{C}^+_b(\mathbb{R}^{\mathbb{N}})$ the set of strictly positive bounded continuous functions from $\mathbb{R}^{\mathbb{N}}$ into $\mathbb{R}$.

Henceforth we are going to show that the (\ref{entropy}) coincides with (\ref{entropy-Gibbs}) in the case that $\mu \in \mathcal{G}$, furthermore, we are going to show that this definition in fact satisfies a variational principle.

\begin{lemma}
Let $\mu \in \mathcal{G}$ a Gibbs state associated to a normalized potential $B$. Then
\[
h(\mu) = \inf_{u \in \mathcal{C}^+_b(\mathbb{R}^{\mathbb{N}})}\left\{ \int_{\mathbb{R}^{\mathbb{N}}} \log\left(\frac{\mathcal{L}_0(u)}{u}\right) d\mu \right\} \,.
\]
\end{lemma}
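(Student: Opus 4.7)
The plan is to prove $h(\mu) = -\int B\, d\mu$ equals the infimum by establishing the lower bound through Jensen's inequality and exhibiting $u = e^{B}$ as an explicit minimizer. The pivotal observation is that the normalization $\mathcal{L}_B(\1) = 1$ says exactly that, for each fixed $x \in \mathbb{R}^{\mathbb{N}}$, the measure $e^{B(ax)} f(a)\, da$ is a probability measure on $\mathbb{R}$. Consequently, for any $u \in \mathcal{C}^+_b(\mathbb{R}^{\mathbb{N}})$, I can rewrite
\[
\mathcal{L}_0(u)(x) = \int_{\mathbb{R}} \bigl(u(ax) e^{-B(ax)}\bigr)\, e^{B(ax)} f(a)\, da,
\]
which realizes $\mathcal{L}_0(u)(x)$ as the expectation of the strictly positive function $a \mapsto u(ax) e^{-B(ax)}$ against that probability kernel.

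For the lower bound, I would apply Jensen's inequality to the concave logarithm, which is legitimate since $u>0$ and $B$ is bounded (Lemma \ref{Holder} extends $B$ continuously to the compact space $\overline{\mathbb{R}}^{\mathbb{N}}$, so $B$ is bounded there and hence on $\mathbb{R}^{\mathbb{N}}$). Jensen yields the pointwise inequality
\[
\log \mathcal{L}_0(u)(x) \geq \int \bigl[\log u(ax) - B(ax)\bigr]\, e^{B(ax)} f(a)\, da = \mathcal{L}_B(\log u)(x) - \mathcal{L}_B(B)(x).
\]
Integrating against $\mu$ and discharging both operators via the fixed-point property $\mathcal{L}_B^*\mu = \mu$ gives
\[
\int \log \mathcal{L}_0(u)\, d\mu \geq \int \log u\, d\mu - \int B\, d\mu,
\]
which rearranges to $\int \log(\mathcal{L}_0(u)/u)\, d\mu \geq -\int B\, d\mu = h(\mu)$, valid for every admissible $u$.

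To match the bound, I would test with the natural candidate $u = e^{B}$, which belongs to $\mathcal{C}^+_b(\mathbb{R}^{\mathbb{N}})$ by boundedness of $B$. Normalization immediately gives $\mathcal{L}_0(e^B)(x) = \int e^{B(ax)} f(a)\, da = 1$, so $\log(\mathcal{L}_0(u)/u) = -B$ pointwise and therefore $\int \log(\mathcal{L}_0(u)/u)\, d\mu = -\int B\, d\mu = h(\mu)$, saturating the infimum. There is no genuine obstacle here: the equality case of Jensen — the integrand being constant in $a$ — forces $u(ax)e^{-B(ax)}$ to be independent of $a$, making $u=e^B$ the inevitable choice, and the only verification needed is its admissibility, which is automatic from the extension supplied by Lemma \ref{Holder}.
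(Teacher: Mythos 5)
Your proof is correct and follows essentially the same route as the paper: both arguments exploit that normalization of $B$ makes $e^{B(ax)}f(a)\,da$ a probability kernel, apply Jensen's inequality to get the lower bound after integrating against the fixed point $\mu$ of $\mathcal{L}_B^*$, and exhibit $u=e^{B}$ as the explicit minimizer. The paper phrases the Jensen step via the substitution $u=\overline{u}e^{-B}$ and the identity $\mathcal{L}_0(\overline{u})=\mathcal{L}_B(u)$, but this is only a notational repackaging of your computation.
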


\begin{proof}
Set $u_0 = e^B$, then $u_0$ belongs to $\mathcal{C}^+_b(\mathbb{R}^{\mathbb{N}})$, moreover, since $B$ is a normalized potential, it follows that
\[
\log\left( \frac{\mathcal{L}_0(u_0)}{u_0} \right) = -B \,.
\]

Therefore, integrating both of the sides of the equation regarding the measure $\mu$, we obtain that
\[
\int_{\mathbb{R}^{\mathbb{N}}}\log\left( \frac{\mathcal{L}_0(u_0)}{u_0} \right)d\mu = h(\mu) \,.
\]

On the other hand, for any $\overline{u} \in \mathcal{C}^+_b(\mathbb{R}^{\mathbb{N}})$ and  the function $u = \overline{u}e^{-B} \in \mathcal{C}^+_b(\mathbb{R}^{\mathbb{N}})$ we get $\mathcal{L}_0(\overline{u}) = \mathcal{L}_B(u)$, which implies that
\[
\log\left( \frac{\mathcal{L}_0(\overline{u})}{\overline{u}} \right) = \log(\mathcal{L}_B(u)) - \log(u) - B \,.
\]

Therefore, integrating in both of the sides of the equation regarding the measure $\mu$, we obtain that
\[
\int_{\mathbb{R}^{\mathbb{N}}}\log\left( \frac{\mathcal{L}_0(\overline{u})}{\overline{u}} \right) d\mu = \int_{\mathbb{R}^{\mathbb{N}}}\log(\mathcal{L}_B(u))d\mu - \int_{\mathbb{R}^{\mathbb{N}}}\log(u)d\mu + h(\mu) \,.
\]

Then, in order to demonstrate this Lemma, it is enough showing that
\[
\int_{\mathbb{R}^{\mathbb{N}}}\log(\mathcal{L}_B(u))d\mu - \int_{\mathbb{R}^{\mathbb{N}}}\log(u)d\mu \geq 0 \,.
\]

Indeed, using that $B$ is a normalized potential, by Jensen's inequality we obtain that $\int_{\mathbb{R}^{\mathbb{N}}}\log(\mathcal{L}_B(u))d\mu \geq \int_{\mathbb{R}^{\mathbb{N}}} \mathcal{L}_B(\log(u))d\mu$, therefore, we obtain the desired inequality.
\end{proof}

Note that the above Lemma shows that (\ref{entropy}) extends (\ref{entropy-Gibbs}) to the set of $\mathcal{M}_{\sigma}(\mathbb{R}^{\mathbb{N}})$. Moreover for any $\mu \in \mathcal{M}_{\sigma}(\mathbb{R}^{\mathbb{N}})$ we have $h(\mu) \leq 0$. Indeed, it follows by (\ref{entropy}) that
\[
h(\mu) \leq \int_{\mathbb{R}^{\mathbb{N}}}\log\left(\mathcal{L}_0(1)(x)\right)d\mu(x) = \int_{\mathbb{R}^{\mathbb{N}}}\log\left(\int_{\mathbb{R}}f(a)da\right)d\mu(x) = 0 \,.
\]

Besides that, for any $A \in \mathcal{H}_{\alpha}(\mathbb{R}^{\mathbb{N}})$ and each $\mu \in \mathcal{M}_{\sigma}(\mathbb{R}^{\mathbb{N}})$, we have

\begin{align}
h(\mu) + \int_{\mathbb{R}^{\mathbb{N}}} Ad\mu
&= \inf_{u \in \mathcal{C}^+_b(\mathbb{R}^{\mathbb{N}})}\left\{ \int_{\mathbb{R}^{\mathbb{N}}} \log\left(\frac{\mathcal{L}_0(u)}{u}\right) d\mu\right\} + \int_{\mathbb{R}^{\mathbb{N}}} Ad\mu \nonumber \\
&\leq \int_{\mathbb{R}^{\mathbb{N}}} \log\left(\frac{\mathcal{L}_0(e^A\psi_A)}{e^{A}\psi_A}\right) d\mu + \int_{\mathbb{R}^{\mathbb{N}}} \log(e^{A})d\mu \nonumber \\
&= \int_{\mathbb{R}^{\mathbb{N}}} \log\left(\frac{\mathcal{L}_0(e^A\psi_A)}{\psi_A}\right) d\mu \nonumber \\
&= \int_{\mathbb{R}^{\mathbb{N}}} \log\left(\frac{\mathcal{L}_A(\psi_A)}{\psi_A}\right) d\mu \nonumber \\
&= \log(\lambda_A) \nonumber \,.
\end{align}

The next theorem shows that (\ref{entropy}) satisfies a variational principle.

\begin{theorem}
Consider $A \in \mathcal{H}_{\alpha}(\mathbb{R}^{\mathbb{N}})$ and $\lambda_A$ be the maximal eigenvalue of $\mathcal{L}_A$ obtained in Theorem \ref{Ruelle}. Then
\[
\log(\lambda_A) = \sup_{\mu \in \mathcal{M}_{\sigma}(\mathbb{R}^{\mathbb{N}})}\left\{ h(\mu) + \int_{\mathbb{R}^{\mathbb{N}}} A d\mu \right\} \,.
\]

Moreover, the supremum in the above expression is attained at $\mu_A$.
\end{theorem}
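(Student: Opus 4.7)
The upper bound $h(\mu) + \int A\, d\mu \leq \log(\lambda_A)$ valid for every $\mu \in \mathcal{M}_\sigma(\mathbb{R}^{\mathbb{N}})$ is already established in the chain of inequalities appearing in the paragraph immediately preceding the theorem (using the test function $u = e^A \psi_A$ in the variational definition of entropy together with the eigenfunction identity $\mathcal{L}_A(\psi_A) = \lambda_A \psi_A$). So the plan reduces to verifying that equality is attained at $\mu = \mu_A$.

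First I would invoke the previous Lemma, which shows that for a Gibbs state $\mu \in \mathcal{G}$ associated to a normalized potential $B$ the two definitions of entropy agree, giving $h(\mu) = -\int B\, d\mu$. Since $\mu_A$ is by construction the fixed point of $\mathcal{L}^*_{\bar A}$ with the normalized potential $\bar A = A + \log \psi_A - \log(\psi_A \circ \sigma) - \log \lambda_A$, this yields
\[
h(\mu_A) = -\int_{\mathbb{R}^{\mathbb{N}}} \bar A \, d\mu_A = -\int_{\mathbb{R}^{\mathbb{N}}} A\, d\mu_A - \int_{\mathbb{R}^{\mathbb{N}}} \bigl( \log \psi_A - \log(\psi_A \circ \sigma) \bigr) d\mu_A + \log(\lambda_A).
\]
Next I would exploit the $\sigma$-invariance of $\mu_A$: since $\log \psi_A$ extends continuously to the compact space $\overline{\mathbb{R}}^{\mathbb{N}}$ (because $\psi_A$ does, and is bounded away from zero by positivity of the strictly positive eigenfunction of a positive operator on a compact space), it is bounded and hence $\mu_A$-integrable, so the coboundary term $\int (\log \psi_A - \log(\psi_A \circ \sigma))\, d\mu_A$ vanishes. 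Rearranging gives the desired equality
\[
h(\mu_A) + \int_{\mathbb{R}^{\mathbb{N}}} A\, d\mu_A = \log(\lambda_A),
\]
and combined with the upper bound this both proves the variational formula and identifies $\mu_A$ as a maximizer.

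The only point that requires a sentence of care is the integrability of $\log \psi_A$ so that the coboundary cancellation is legitimate (otherwise one only gets $h(\mu_A) + \int A\, d\mu_A \leq \log \lambda_A$ directly, which combined with the opposite inequality still yields equality, but at the cost of the argument feeling circular). The cleanest route is to remark that $\psi_A = \psi_{A'}|_{\mathbb{R}^{\mathbb{N}}}$ for the continuous strictly positive eigenfunction on the compact space $\overline{\mathbb{R}}^{\mathbb{N}}$ obtained in Theorem \ref{Ruelle}, so $\min \psi_{A'} > 0$ and $\max \psi_{A'} < \infty$, making $\log \psi_A$ bounded on $\mathbb{R}^{\mathbb{N}}$. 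This is the only mild obstacle; everything else is bookkeeping from the normalization and the previous lemma.
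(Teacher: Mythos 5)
Your proposal is correct and follows essentially the same route as the paper: the upper bound is exactly the chain of inequalities computed just before the theorem (with test function $e^A\psi_A$), and equality at $\mu_A$ comes from the lemma identifying the two entropy definitions together with the coboundary cancellation $\int(\log\psi_A-\log(\psi_A\circ\sigma))\,d\mu_A=0$, which the paper carries out in the computation of $h(\mu_A)$ displayed after the definition of entropy for Gibbs states. Your extra remark that $\log\psi_A$ is bounded because $\psi_A=\psi_{A'}|_{\mathbb{R}^{\mathbb{N}}}$ with $\psi_{A'}$ continuous and strictly positive on the compact space $\overline{\mathbb{R}}^{\mathbb{N}}$ is a legitimate point of care that the paper leaves implicit, but it does not change the argument.
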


\begin{proof}
Since $A \in \mathcal{H}^+_{\alpha}(\mathbb{R}^{\mathbb{N}})$, follows that
\begin{align}
&\sup_{\mu \in \mathcal{M}_{\sigma}(\mathbb{R}^{\mathbb{N}})}\left\{h(\mu) + \int_{\mathbb{R}^{\mathbb{N}}}A d\mu \right\} \nonumber \\
&\leq \sup_{\mu \in \mathcal{M}_{\sigma}(\mathbb{R}^{\mathbb{N}})}\left\{-\int_{\mathbb{R}^{\mathbb{N}}}A d\mu + \log(\lambda_A) + \int_{\mathbb{R}^{\mathbb{N}}}A d\mu \right\} \nonumber \\
&= \log(\lambda_A) \nonumber \,.
\end{align}

Conversely, we have that
\[
\log(\lambda_A) = h(\mu_A) + \int_{\mathbb{R}^{\mathbb{N}}}A d\mu_A \leq \sup_{\mu \in \mathcal{M}_{\sigma}(\mathbb{R}^{\mathbb{N}})}\left\{ h(\mu) + \int_{\mathbb{R}^{\mathbb{N}}} A d\mu \right\} \,.
\]
\end{proof}

\medskip
\medskip

The following lemma provides conditions to guarantee existence of maximizing probability measures through the existence of ground states.

\begin{lemma}
Let $A \in \mathcal{H}_{\alpha}(\mathbb{R}^{\mathbb{N}})$. If the family $(\mu_{\beta A})_{\beta>0}$ has an accumulation point $\mu_{\infty}$ at infinity, then this point is an $A$-maximizing probability measure.
\label{maximizing-measure}
\end{lemma}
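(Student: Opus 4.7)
The plan is to combine the variational principle proved just above with the compact extension from Lemma \ref{Holder}. I would start from the identity $\log(\lambda_{\beta A}) = h(\mu_{\beta A}) + \beta\int A\, d\mu_{\beta A}$ and the inequality $h(\nu) + \beta\int A\, d\nu \leq \log(\lambda_{\beta A})$ valid for every $\nu \in \mathcal{M}_\sigma(\mathbb{R}^{\mathbb{N}})$. Combining the two and using $h(\mu_{\beta A}) \leq 0$ gives the key estimate
\[
\int A\, d\nu \;\leq\; \int A\, d\mu_{\beta A} \;-\; \frac{h(\nu)}{\beta},
\]
valid for every $\beta > 0$.

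Next I would pass to the limit along a subsequence $\beta_n \to \infty$ with $\mu_{\beta_n A} \to \mu_\infty$. Since $A$ extends to a bounded continuous function on the compact space $\overline{\mathbb{R}}^{\mathbb{N}}$ (Lemma \ref{Holder}), the functional $\nu \mapsto \int A\, d\nu$ is continuous in the weak-$*$ topology, so $\int A\, d\mu_{\beta_n A} \to \int A\, d\mu_\infty$. For any invariant $\nu$ with $h(\nu) > -\infty$, the term $h(\nu)/\beta_n$ vanishes in the limit, yielding $\int A\, d\nu \leq \int A\, d\mu_\infty$ for every such $\nu$.

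To upgrade this to all $\nu \in \mathcal{M}_{\sigma}(\mathbb{R}^{\mathbb{N}})$, I would transfer the problem to the compactification. By the identification used in the proof of Theorem \ref{Ruelle} (the tail-vanishing (\ref{tail})), the Gibbs state $\mu_{\beta A}$ is the restriction to $\mathbb{R}^{\mathbb{N}}$ of $\mu_{\beta A'}$, so $\mu_\infty$, viewed inside $\mathcal{M}_\sigma(\overline{\mathbb{R}}^{\mathbb{N}})$, is an accumulation point of $(\mu_{\beta A'})_{\beta>0}$ as $\beta \to \infty$. Invoking the compact-case zero-temperature result of \cite{MR3377291} (the same statement used in Theorem \ref{calibrated-sub-action}), $\mu_\infty$ is $A'$-maximizing, i.e.\ $\int A'\, d\mu_\infty = m(A') := \sup_{\nu \in \mathcal{M}_\sigma(\overline{\mathbb{R}}^{\mathbb{N}})} \int A'\, d\nu$. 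The inclusion $\mathcal{M}_{\sigma}(\mathbb{R}^{\mathbb{N}}) \subset \mathcal{M}_\sigma(\overline{\mathbb{R}}^{\mathbb{N}})$ combined with $A'|_{\mathbb{R}^{\mathbb{N}}} = A$ gives $m(A) \leq m(A')$, and since $\mu_\infty \in \mathcal{M}_{\sigma}(\mathbb{R}^{\mathbb{N}})$, we also have $m(A) \geq \int A\, d\mu_\infty = m(A')$. Therefore $\int A\, d\mu_\infty = m(A)$, and $\mu_\infty$ is $A$-maximizing.

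The main obstacle I anticipate is the possibility that some $\nu \in \mathcal{M}_\sigma(\mathbb{R}^{\mathbb{N}})$ has $h(\nu) = -\infty$, in which case the purely variational comparison above produces no useful bound. Passing to the compactification and invoking the compact-case accumulation theorem sidesteps this, because that result guarantees $A'$-maximization against \emph{every} $\sigma$-invariant measure on $\overline{\mathbb{R}}^{\mathbb{N}}$ simultaneously, with no entropy hypothesis on the test measure.
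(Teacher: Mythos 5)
Your proposal is correct, and its core (the variational identity $\log\lambda_{\beta A}=h(\mu_{\beta A})+\beta\int A\,d\mu_{\beta A}$, the bound $h\leq 0$, and weak-$*$ continuity of $\nu\mapsto\int A\,d\nu$ for the bounded H\"older $A$) is exactly the engine of the paper's proof. Where you diverge is in how the limit is identified with $m(A)$: the paper simply opens its chain with $m(A)=\lim_n\frac{1}{\beta_n}\log(\lambda_{\beta_n A})$ — i.e.\ it imports item $c)$ of Theorem \ref{calibrated-sub-action} — and then squeezes $m(A)\leq\int A\,d\mu_\infty\leq m(A)$; you instead push $\mu_\infty$ into the compactification $\overline{\mathbb{R}}^{\mathbb{N}}$ via the tail-vanishing (\ref{tail}) and invoke the compact-case zero-temperature selection theorem (Theorem 5 of \cite{MR2496111}, the same tool the paper uses in the Proposition immediately following this Lemma), together with the observation that $m(A)=m(A')$ once a maximizing measure is known to charge only $\mathbb{R}^{\mathbb{N}}$. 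Two remarks. First, your closing step actually supersedes your opening one: once you know $\mu_\infty$ is $A'$-maximizing and supported in $\mathbb{R}^{\mathbb{N}}$, the inequality $\int A\,d\nu\leq\int A\,d\mu_\infty-h(\nu)/\beta$ is no longer needed, so the first half of your argument could be deleted. Second, your worry about test measures with $h(\nu)=-\infty$ is well founded and is precisely the point the paper's one-line chain glosses over: the identity $m(A)=\lim_\beta\frac1\beta\log(\lambda_{\beta A})$ is only established in the paper under the decay hypothesis (\ref{decay}), which Lemma \ref{maximizing-measure} does not assume, so your detour through the compactification arguably yields a cleaner justification under the Lemma's stated hypotheses than the proof printed in the paper.
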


\begin{proof}
By hypothesis, there is a sequence $(\beta_n)_{n \in \mathbb{N}}$, with $\beta_n \to +\infty$, such that $\lim_{n \in \mathbb{N}}\mu_{\beta_n A} = \mu_{\infty}$. Using that $h(\mu) \leq 0$, follows that
\begin{align}
m(A) = \lim_{n \in \mathbb{N}}\frac{1}{\beta_n}\log(\lambda_{\beta_n A})
&= \lim_{n \in \mathbb{N}}\left(\frac{1}{\beta_n}h(\mu_{\beta_n A}) + \int_{\mathbb{R}^{\mathbb{N}}} Ad\mu_{\beta_n A}\right) \nonumber \\
&\leq \int_{\mathbb{R}^{\mathbb{N}}} Ad\mu_{\beta_n A} \nonumber \\
&\leq \int_{\mathbb{R}^{\mathbb{N}}} Ad\mu_{\infty} \nonumber \\
&=m(A) \nonumber \,.
\end{align}

Therefore, $\int_{\mathbb{R}^{\mathbb{N}}} Ad\mu_{\infty} = m(A)$.
\end{proof}

The next proposition shows the existence of ground states. Some interesting results in this direction can be found in \cite{MR3227149, MR1958608, MR3864383, MR2151222, MR2800665, MR2176962}.

\begin{proposition}
Consider $A \in \mathcal{H}_{\alpha}(\mathbb{R}^{\mathbb{N}})$. If there exists $z_0 \in \mathbb{R}$ such that the extension $A' \in \mathcal{H}_{\alpha}(\overline{\mathbb{R}}^{\mathbb{N}})$ satisfies (\ref{decay}) for all $x \in \overline{\mathbb{R}}^{\mathbb{N}}$ and all $n \in \mathbb{N}$, then the family $(\mu_{\beta A})_{\beta>0}$ has an accumulation point $\mu_{\infty}$ at infinity.
\end{proposition}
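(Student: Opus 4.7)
The plan is to pass to the compactification $\overline{\mathbb{R}}^{\mathbb{N}}$, extract a weak-$*$ accumulation point there, and then invoke condition (\ref{decay}) to conclude that its support lies in $\mathbb{R}^{\mathbb{N}}$. By Theorem \ref{Ruelle} each $\mu_{\beta A}$ is supported in $\mathbb{R}^{\mathbb{N}}$ and therefore extends to a Borel probability measure $\mu_{\beta A'}$ on the compact space $\overline{\mathbb{R}}^{\mathbb{N}}$. Since the set of Borel probability measures on a compact metric space is weak-$*$ compact and metrizable, one can choose a sequence $\beta_n \to +\infty$ with $\mu_{\beta_n A'} \to \mu_{\infty}$ weakly in $\mathcal{M}(\overline{\mathbb{R}}^{\mathbb{N}})$. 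Since the shift $\sigma$ is continuous with respect to $\widehat{d}$, the limit $\mu_{\infty}$ is automatically $\sigma$-invariant.

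The central step is to identify $\mu_{\infty}$ as an $A'$-maximizing measure. The identity $h(\mu_{\beta A'}) = -\beta \int A'\, d\mu_{\beta A'} + \log \lambda_{\beta A'}$, noted right after (\ref{entropy-Gibbs}), rearranges as
\[
\frac{1}{\beta_n}\log \lambda_{\beta_n A'} = \frac{1}{\beta_n}h(\mu_{\beta_n A'}) + \int_{\overline{\mathbb{R}}^{\mathbb{N}}} A'\, d\mu_{\beta_n A'}.
\]
The left-hand side tends to $m(A')$ by Theorem \ref{calibrated-sub-action}(c), and the integral on the right tends to $\int A'\, d\mu_{\infty}$ by weak-$*$ convergence and continuity of $A'$. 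Combining the uniform bound $|\log \lambda_{\beta A}|\leq \beta\|A\|$ from the preceding lemma with $\|A'\|_{\infty}\leq \|A\|$ keeps $\frac{1}{\beta_n}h(\mu_{\beta_n A'})$ in the interval $[-2\|A\|,0]$; after passing to a further subsequence it converges to some $c\leq 0$. Sending $n\to\infty$ produces $m(A') = c + \int A'\, d\mu_{\infty}$, while $\sigma$-invariance of $\mu_{\infty}$ gives $\int A'\, d\mu_{\infty}\leq m(A')$. This sandwich forces $c=0$ and $\int A'\, d\mu_{\infty}= m(A')$.

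Condition (\ref{decay}) together with Theorem \ref{calibrated-sub-action}(a) now guarantees that the support of any $A'$-maximizing measure sits inside $\mathbb{R}^{\mathbb{N}}$, so $\mu_{\infty}(\overline{\mathbb{R}}^{\mathbb{N}}\setminus \mathbb{R}^{\mathbb{N}}) = 0$ and $\mu_{\infty}\in \mathcal{M}_{\sigma}(\mathbb{R}^{\mathbb{N}})$. For every $\varphi\in \mathcal{H}_{\alpha}(\mathbb{R}^{\mathbb{N}})$, Lemma \ref{Holder} supplies a continuous extension $\varphi'$ on $\overline{\mathbb{R}}^{\mathbb{N}}$, so weak-$*$ convergence yields $\int \varphi\, d\mu_{\beta_n A} = \int \varphi'\, d\mu_{\beta_n A'} \to \int \varphi'\, d\mu_{\infty} = \int \varphi\, d\mu_{\infty}$, which is precisely the sense of convergence in which $\mu_{\infty}$ plays the role of an accumulation point at infinity needed to feed into Lemma \ref{maximizing-measure}. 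I expect the subtle point to be the uniform control on $\frac{1}{\beta_n}h(\mu_{\beta_n A'})$ in the middle step: without it, one could not pass to the limit in the variational identity and thus could not identify $\mu_{\infty}$ as maximizing in order to invoke Theorem \ref{calibrated-sub-action}(a).
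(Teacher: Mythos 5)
Your proof is correct and follows the same route as the paper: extract a weak-$*$ accumulation point of $(\mu_{\beta A'})$ on the compact space $\overline{\mathbb{R}}^{\mathbb{N}}$, identify it as an $A'$-maximizing measure, apply Theorem \ref{calibrated-sub-action}(a) together with (\ref{decay}) to place its support in $\mathbb{R}^{\mathbb{N}}$, and transfer the weak convergence back to $\mathbb{R}^{\mathbb{N}}$ by testing against bounded H\"older functions and their extensions. The only deviation is that where the paper simply cites Theorem 5 of \cite{MR2496111} for the maximizing property, you prove it directly from the identity $\tfrac{1}{\beta}\log\lambda_{\beta A'}=\tfrac{1}{\beta}h(\mu_{\beta A'})+\int A'\,d\mu_{\beta A'}$; that argument is sound (it is essentially Lemma \ref{maximizing-measure} carried out on the compactification), and the uniform bound $\bigl|\tfrac{1}{\beta}h(\mu_{\beta A'})\bigr|\le 2\|A\|$ you single out is indeed the point that lets one pass to the limit.
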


\begin{proof}
Let $\beta > 0$, and $\beta A' : \overline{\mathbb{R}}^{\mathbb{N}} \to \mathbb{R}$ the $\alpha$-H\"older continuous extension of $\beta A$, by Theorem \ref{Ruelle} we have $\mu_{\beta A'}(\overline{\mathbb{R}}^{\mathbb{N}} \setminus \mathbb{R}^{\mathbb{N}}) = 0$ and $\mu_{\beta A} = \mu_{\beta A'}|_{\mathbb{R}^{\mathbb{N}}}$. Therefore, follows of compactness of $\overline{\mathbb{R}}^{\mathbb{N}}$, that $(\mu_{\beta A'})_{\beta > 0}$ has an accumulation point $\mu'_{\infty}$ at infinity, in other words, there is a sequence $(\beta_n)_{n \in \mathbb{N}}$ with $\beta_n \to \infty$ such that $\lim_{n \in \mathbb{N}}\mu_{\beta_n A'} = \mu'_{\infty}$. By Theorem 5 in \cite{MR2496111} we have that $\mu'_{\infty}$ is an $A'$-maximizing measure, then using part $a)$ of Theorem \ref{calibrated-sub-action} we obtain that this probability measure is supported in $\mathbb{R}^{\mathbb{N}}$.

Let $g : \mathbb{R}^{\mathbb{N}} \to \mathbb{R}$ a Lipschitz continuous function, observe that this implies that $g$ is bounded. Defining $g'(x) = \lim_{y \to x}g(y)$ for each $x \in \overline{\mathbb{R}}^{\mathbb{N}}$, we obtain that $g' : \overline{\mathbb{R}}^{\mathbb{N}} \to \mathbb{R}$ is a bounded Lipschitz continuous function, which implies
\[
\lim_{n \in \mathbb{N}}\int_{\mathbb{R}^{\mathbb{N}}}gd\mu_{\beta_n A} = \lim_{n \in \mathbb{N}}\int_{\overline{\mathbb{R}}^{\mathbb{N}}}g'd\mu_{\beta_n A} = \int_{\overline{\mathbb{R}}^{\mathbb{N}}}g'd\mu'_{\infty} = \int_{\mathbb{R}^{\mathbb{N}}}gd\mu_{\infty} \,.
\]

In other words, $(\mu_{\beta A})_{\beta>0}$ has an accumulation point $\mu_{\infty}$ at infinity, and by Lemma \ref{maximizing-measure} this probability measure is $A$-maximizing.
\end{proof}

\section{Involution kernel}
\label{Involution-kernel-section}

It is widely known that Livsic's Theorem guarantees that a potential with some regularity defined from $\mathbb{R}^{\mathbb{Z}}$ into $\mathbb{R}$ is co-homologous (via the bilateral shift)  to a potential defined from $\mathbb{R}^{\mathbb{N}}$ into $\mathbb{R}$. Conversely, also there exists a tool, known as involution kernel, that provides the desired co-homology between a potential defined from $\mathbb{R}^{\mathbb{N}}$ into $\mathbb{R}$ and a potential defined from $\mathbb{R}^{\mathbb{Z}}$ into $\mathbb{R}$.
The case of the shift acting on $\{1,2,...,d\}^\mathbb{N}$ was considered in
\cite{MR2210682} .

In this section we construct an involution kernel for the non-compact case studied in section \ref{Ruelle-operator-section} and we show some properties that provides an extension of the Gibbs states defined in the last section for the bilateral case joint with an interesting example for the case of stationary Markov probability measures.

We define $(\mathbb{R}^{\mathbb{N}})^* = \{(\ldots, y_2, y_1) \in \mathbb{R}^{\mathbb{N}}\}$ and the map $\sigma^* : (\mathbb{R}^{\mathbb{N}})^* \to (\mathbb{R}^{\mathbb{N}})^*$ as $\sigma^*(\ldots, y_2, y_1) = (\ldots, y_3, y_2)$. For each pair of points $(y, x) \in (\mathbb{R}^{\mathbb{N}})^* \times \mathbb{R}^{\mathbb{N}}$ we will denote by $(y|x)$ the element $(\ldots, y_2, y_1| x_1, x_2, \ldots)$. The set of ordered pairs $(y|x)$ with $x \in \mathbb{R}^{\mathbb{N}}$ and $y \in (\mathbb{R}^{\mathbb{N}})^*$ will be called $\widehat{\mathbb{R}^{\mathbb{N}}}$, which is isomorphic to $\mathbb{R}^{\mathbb{Z}}$. In this case we can define a bilateral sub-shift $\widehat{\sigma} : \widehat{\mathbb{R}^{\mathbb{N}}} \to \widehat{\mathbb{R}^{\mathbb{N}}}$ as the map $\widehat{\sigma}(y|x) = (\tau^*_x(y)|\sigma(x))$, with $\tau^*_x(y) = (\ldots, y_2, y_1, x_1)$.

Now, fixing $A \in \mathcal{H}_{\alpha}(\mathbb{R}^{\mathbb{N}})$, we say that $W: \widehat{\mathbb{R}^{\mathbb{N}}} \to \mathbb{R}$ is an involution kernel for $A$, if the adjunct potential $A^*$ defined by
\[
A^* = A \circ \widehat{\sigma}^{-1} + W \circ \widehat{\sigma}^{-1} - W \,,
\]

depends only of the variable $y$. In \cite{MR2210682} was shown that in fact $A$ has an $\alpha$-H\"older continuous involution kernel defined by
\begin{equation}
W(y|x) = \sum_{n \in \mathbb{N}} A(\tau_{y, n}(x)) - A(\tau_{y, n}(x'))\,,
\label{involution-kernel}
\end{equation}

with $\tau_{y, n}(x) = (y_n ,\ldots, y_1, x_1, x_2, \ldots )$. Note that this involution kernel is $\alpha$-H\"older, because $A \in \mathcal{H}_{\alpha}(\mathbb{R}^{\mathbb{N}})$ and $\tau_{y, n}(x)$ is a contraction. Furthermore, the above implies that $A^*$ is also an $\alpha$-H\"older potential.

Now, we are going to define a natural extension of the Gibbs state associated to $A$ in the bilateral sub-shift $\widehat{\mathbb{R}^{\mathbb{N}}}$. Taking a constant $c \in \mathbb{R}$ satisfying
\[
\int_{\widehat{\mathbb{R}^{\mathbb{N}}}}e^{W(y|x)}d(\rho_{A^*} \times \rho_A)(y, x) = e^c \,,
\]

which is possible because $e^{W(y|x)}$ is an strictly positive function, we define $K(y|x) = e^{W(y|x) - c}$ and, using the above function, $\widehat{\mu}_A$ is defined by
\[
d\widehat{\mu}_A(y, x) = K(y|x) d(\rho_{A^*} \times \rho_A)(y, x) \,.
\]

Using a similar procedure to the one which was  used in \cite{MR2210682} it is possible to show that $\widehat{\mu} \in \mathcal{M}_{\widehat{\sigma}}(\widehat{\mathbb{R}^{\mathbb{N}}})$ and extends the Gibbs states $\mu_A$ and $\mu_{A^*}$ in the following way:

if $\varphi: \widehat{\mathbb{R}^{\mathbb{N}}} \to \mathbb{R}$ is an $\widehat{\mu}_A$-integrable function, such that, $\varphi(y|x) = \varphi(z|x)$, for all $y, z \in (\mathbb{R}^{\mathbb{N}})^*$, then using the notation $\varphi(y|x) = \varphi(x)$, we have
\[
\int_{\mathbb{R}^{\mathbb{N}}}\varphi(x)d\mu_A(x) = \int_{\widehat{\mathbb{R}^{\mathbb{N}}}}\varphi(x)d\widehat{\mu}_A(y, x) \,.
\]

Analogously, if $\varphi: \widehat{\mathbb{R}^{\mathbb{N}}} \to \mathbb{R}$ satisfies $\varphi(y|x) = \varphi(y|z)$ for all $x, z \in \mathbb{R}^{\mathbb{N}}$, then using the notation $\varphi(y|x) = \varphi(y)$, we obtain that
\[
\int_{(\mathbb{R}^{\mathbb{N}})^*}\varphi(y)d\mu_{A^*}(y) = \int_{\widehat{\mathbb{R}^{\mathbb{N}}}}\varphi(y)d\widehat{\mu}_A(y, x) \,.
\]

The proofs of the above claims  are a consequence of ones for  the compact case that appears in \cite{MR3377291}. Here we use the fact that in our case the Gibbs states $\mu_{A'}$ and $\mu_{(A')^*}$ are supported in $\mathbb{R}^{\mathbb{N}}$. Moreover, using the kernel $K$, we can find an explicit form for the eigenfunctions of $\mathcal{L}_A$ and $\mathcal{L}_{A^*}$ associated to $\lambda_A = \lambda_{A^*}$, which are given by $\psi_A(x) = \int_{(\mathbb{R}^{\mathbb{N}})^*} K(y|x) d\rho_{A^*}(y)$ and $\psi_{A^*}(y) = \int_{\mathbb{R}^{\mathbb{N}}} K(y|x) d\rho_A(x)$ respectively.

Besides that, the $j$-th partial derivative of this involution kernel is well defined for all $j \in \mathbb{N}$, when $A$ satisfies the following conditions:
\begin{enumerate}[i)]
\item There exists the partial derivative of $A$ regarding the $j$-th coordinate at the point $x$, for all $x \in \mathbb{R}^{\mathbb{N}}$.
\item Given $\epsilon > 0$, there exists $H_{\epsilon} > 0$, such that, for all $x \in \mathbb{R}^{\mathbb{N}}$, if $h < H_{\epsilon}$, then for all $j \in \mathbb{N}$ it is satisfied the expression
\[
\left| \frac{A(x + he_j) - A(x)}{h} - D_j A(x)\right| < \frac{\epsilon}{2^j} \,.
\]

\end{enumerate}

From the definition of partial derivative and $ii)$, it is possible to demonstrate, in a similar way as in \cite{MR3377291}, that the $j$-th partial derivative of the involution kernel $W$ satisfies the following equation
\begin{equation}
D_j W(y|x) = \sum_{n \in \mathbb{N}} D_{n+j}A(\tau_{y,n}(x)) \,.
\label{derivative-involution-kernel}
\end{equation}

Therefore, using the explicit form of the eigenfunctions (associated to $\mathcal{L}_A$ and $\mathcal{L}_{A^*}$)  and also using the fact that $K(\cdot|x)$ is integrable (regarding $\rho_{A^*}$) and $K(y|\cdot)$ is integrable (with respect to $\rho_A$), we obtain the following expressions for the partial derivatives of the eigenfunctions associated to $\mathcal{L}_A$ and $\mathcal{L}_{A^*}$.
\[
D_j \psi_A(x) = \int_{(\mathbb{R}^{\mathbb{N}})^*} K(y|x) \sum_{n \in \mathbb{N}} D_{n+j}A(\tau_{y,n}(x))d\rho_{A^*}(y) \,,
\]

and
\[
D_j \psi_{A^*}(x) = \int_{\mathbb{R}^{\mathbb{N}}} K(y|x) \sum_{n \in \mathbb{N}} D_{n+j}A(\tau_{y,n}(x))d\rho_A(x) \,.
\]

Henceforth, we will interested in describing the differentiable property  of the eigenfunction of the Ruelle operator for a differentiable potential $A$ (in the case of Markov chains). Some results are known  in the compact setting for the $XY$ model (see \cite{MR3377291}).  We are interested in study partial derivatives of the involution kernel and the entropy for induced stationary Markov measures associated to potentials that depends only of two coordinates, in other words, potentials $A : \mathbb{R}^{\mathbb{N}} \to \mathbb{R}$ such that $A(x) = A(x_1, x_2)$, which implies that in this case when can consider that $A : \mathbb{R}^2 \to \mathbb{R}$.

Stationary Markov measures are defined using a transition kernel $P$, and an stationary measure for $P$, which we will denote by $\theta$. The transition kernel is a strictly positive function $P : \mathbb{R}^2 \to \mathbb{R}$ satisfying
\begin{equation}
\int_{\mathbb{R}}P(x_1, x_2)f(x_2)dx_2 = 1 \,.
\label{transition-kernel}
\end{equation}

The stationary measure for $P$ is an strictly positive function $\theta : \mathbb{R} \to \mathbb{R}$ that satisfies
\begin{equation}
\int_{\mathbb{R}}\theta(x_1)P(x_1, x_2)f(x_1)dx_1 = \theta(x_2) \,.
\label{stationary-measure}
\end{equation}

Using the foregoing equations, we can define the stationary Markov measure induced by $P$ and $\theta$, as the probability measure
\[
\mu([A_1 \ldots A_n]) = \int_{A_1 \times \ldots \times A_n}\theta(x_1)P(x_1, x_2) \ldots P(x_{n-1}, x_n)f(x_n) \ldots f(x_1)dx_1 \ldots dx_n \,.
\]

Besides that, it is possible to show, using (\ref{involution-kernel}), that any $\alpha$-H\"older potential $A : \mathbb{R}^{\mathbb{N}} \to \mathbb{R}$ that depends of two coordinates, has an $\alpha$-H\"older involution kernel defined by $W(y|x) = A(y_1, x_1)$, and in this case $A^*(y) = A(y_2, y_1)$.

The following Theorem shows that Gibbs states associated to potentials that depends of two coordinates are stationary Markov measures and conversely.

\begin{theorem}
\begin{enumerate}[a)]
\item Consider $A \in \mathcal{H}_{\alpha}(\mathbb{R}^{\mathbb{N}})$ such that $A(x) = A(x_1, x_2)$, then there exists an stationary Markov measure $\mu$, that is Gibbs state associated to $A$.
\item Given an stationary Markov measure $\mu$ induced by $P$ and $\theta$, there exists $A \in \mathcal{H}_{\alpha}(\mathbb{R}^{\mathbb{N}})$, with $A(x) = A(x_1, x_2)$, such that this measure is a Gibbs state for $A$.
\end{enumerate}
\end{theorem}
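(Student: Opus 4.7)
The plan is to establish the two-way correspondence between $\alpha$-H\"older potentials of the form $A(x)=A(x_1,x_2)$ and stationary Markov measures by combining Theorem \ref{Ruelle} with the involution kernel identities $W(y|x)=A(y_1,x_1)$ and $A^*(y)=A(y_2,y_1)$ already obtained in this section. The key observation for part $a)$ is that, since $A$ depends on only two coordinates, a direct unrolling of iterates gives
\[
\mathcal{L}_A^n(1)(x) = \int_{\mathbb{R}^n} e^{A(a_n,a_{n-1}) + A(a_{n-1},a_{n-2}) + \cdots + A(a_1,x_1)} f(a_n)\cdots f(a_1)\,da_1\cdots da_n,
\]
which depends only on $x_1$; together with the uniform limit $\lambda_A^{-n}\mathcal{L}_A^n(1)\to\psi_A$ from Theorem \ref{Ruelle}$(d)$, this forces $\psi_A(x)=\psi(x_1)$ for some strictly positive $\psi:\mathbb{R}\to\mathbb{R}$. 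Applying the same reasoning to the adjunct potential $A^*$ yields $\psi_{A^*}(y)=\psi^*(y_1)$, and both $\psi$ and $\psi^*$ satisfy concrete one-variable eigenfunction equations $\int e^{A(u,v)}\psi(u)f(u)\,du = \lambda_A\psi(v)$ and $\int e^{A(v,u)}\psi^*(u)f(u)\,du = \lambda_A\psi^*(v)$.

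With these building blocks in hand I would define
\[
P(x_1,x_2) := \frac{\psi^*(x_2)}{\lambda_A\,\psi^*(x_1)}\,e^{A(x_1,x_2)}, \qquad \theta(x) := \frac{\psi(x)\,\psi^*(x)}{\int_{\mathbb{R}}\psi(u)\psi^*(u)f(u)\,du}.
\]
The normalization $\int P(x_1,x_2)f(x_2)\,dx_2=1$ is immediate from the eigenfunction equation for $\psi^*$, and the stationarity $\int\theta(x_1)P(x_1,x_2)f(x_1)\,dx_1=\theta(x_2)$ follows from the one for $\psi$. Let $\mu$ denote the associated stationary Markov measure, which is automatically $\sigma$-invariant. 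To finish part $a)$ I would verify that $\mathcal{L}_{\bar{A}}^*\mu=\mu$ by testing against product cylinder functions, writing out the Markov density, and observing that after integrating out the coordinate prepended by $\mathcal{L}_{\bar{A}}$ the equation reduces to the one-step cohomological identity $\theta(x_1)P(x_1,x_2)=\theta(x_2)e^{\bar{A}(x_1,x_2)}$, which becomes a tautology upon substituting the definitions of $P$, $\theta$, and $\bar{A}=A+\log\psi-\log(\psi\circ\sigma)-\log\lambda_A$. Uniqueness of the $\mathcal{L}_{\bar{A}}^*$-invariant measure from Theorem \ref{Ruelle}$(b)$ then forces $\mu=\mu_A$.

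For part $b)$ I would set $A(x_1,x_2):=\log P(x_1,x_2)$, implicitly assuming the regularity required so that $A\in\mathcal{H}_{\alpha}(\mathbb{R}^{\mathbb{N}})$. The stationarity of $\theta$ gives $\mathcal{L}_A(\theta)=\theta$, so $\lambda_A=1$ and $\psi_A=\theta$ (viewed as a function of the first coordinate), while the second-argument normalization $\int P(v,a)f(a)\,da=1$ gives $\psi_{A^*}\equiv 1$. Plugging these ingredients into the formulas from part $a)$ reconstructs precisely the original kernel $P$ and stationary density $\theta$, so the Gibbs state $\mu_A$ provided by Theorem \ref{Ruelle} coincides with the given $\mu$.

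The main obstacle is the reduction of $\mathcal{L}_{\bar{A}}^*\mu=\mu$ to the one-step coboundary identity in part $a)$: one has to carry out the bookkeeping on cylinder functions, carefully tracking how the extra coordinate inserted by $\mathcal{L}_{\bar{A}}$ is absorbed into the Markov density, and only then does the entire equation collapse to a single cohomological relation on two coordinates; everything else is algebra driven by the two one-variable eigenfunction equations. In part $b)$ the only delicate point is the implicit H\"older regularity assumption on $\log P$, which is required by the conclusion $A\in\mathcal{H}_{\alpha}(\mathbb{R}^{\mathbb{N}})$ but is not automatic from the bare notion of a stationary Markov measure.
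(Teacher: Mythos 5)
Your proposal is correct and follows essentially the same route as the paper: both define the transition kernel $P(x_1,x_2)=e^{A(x_1,x_2)}\psi_{A^*}(x_2)/(\lambda_A\psi_{A^*}(x_1))$ and stationary density $\theta=\psi_A\psi_{A^*}/\pi_A$ from the one-variable eigenfunctions, verify the normalization and stationarity via the eigenfunction equations, and check the Gibbs property on cylinder functions before extending by dominated convergence, with part $b)$ handled identically via $A=\log P$, $\lambda_A=1$, $\psi_{A^*}\equiv 1$. Your explicit reduction to the one-step identity $\theta(x_1)P(x_1,x_2)=\theta(x_2)e^{\bar{A}(x_1,x_2)}$ is exactly the computation the paper delegates to Theorem 16 of \cite{MR2864625}, and your remark about the implicit H\"older assumption on $\log P$ in part $b)$ is a fair observation the paper leaves unstated.
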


\begin{proof}
\begin{enumerate}[a)]
\item Let $\psi_A$ and $\overline{\psi}_A$ be the eigenfunctions associated to $\mathcal{L}_A$ and $\mathcal{L}_{A^*}$, respectively. Note that $\psi_A(x) = \psi_A(x_1)$ and $\overline{\psi}_A(x) = \overline{\psi}_A(x_1)$, because the potential $A$ depends only on its first two coordinates.

Now we are going to demonstrate that $P_A(x_1, x_2) = \frac{e^{A(x_1, x_2)\overline{\psi}_A(x_2)}}{\lambda_A\overline{\psi}_A(x_1)}$ is a transition kernel, and $\theta(x_1) = \frac{\psi_A(x_1)\overline{\psi}_A(x_1)}{\pi_A}$ is its respectively stationary measure, with $\pi_A = \int_{\mathbb{R}} \psi_A(x_1)\overline{\psi}_A(x_1)f(x_1)dx_1$.

Using a similar procedure to the one which  was used in Theorem 16 of \cite{MR2864625} it is possible to demonstrate that for any $g_n \in \mathcal{H}_{\alpha}(\mathbb{R}^{\mathbb{N}})$, satisfying $g_n(x) = g_n(x_1, \ldots, x_n)$ for some $n \in \mathbb{N}$, it is true  the equality
\[
\int_{\mathbb{R}^{\mathbb{N}}}\mathcal{L}_{\overline{A}} (g_n) d\mu = \int_{\mathbb{R}^{\mathbb{N}}} g_n d\mu \,.
\]

The foregoing equation guarantees part $a)$ of this Theorem, because in the general case, i.e. when $g \in \mathcal{H}_{\alpha}(\mathbb{R}^{\mathbb{N}})$ depends on an arbitrary number of coordinates, defining $g_n(x) = g(x^n)$, with $x^n = (x_1, \ldots, x_n, 1^{\infty})$ we have
\[
|g(x) - g_n(x)| \leq K\widehat{d}(x, x^n)^{\alpha} \leq \frac{K}{2^{n\alpha}}\,.
\]

This implies that $(g_n)_{n \in \mathbb{N}}$ converges pointwise to $g$. Then, by Dominated Convergence Theorem it follows that
\[
\int_{\mathbb{R}^{\mathbb{N}}}\mathcal{L}_{\overline{A}} g d\mu = \lim_{n\in\mathbb{N}}\int_{\mathbb{R}^{\mathbb{N}}}\mathcal{L}_{\overline{A}} g_n d\mu = \lim_{n\in\mathbb{N}}\int_{\mathbb{R}^{\mathbb{N}}} g_n d\mu = \int_{\mathbb{R}^{\mathbb{N}}} g d\mu \,.
\]

\item If $P$ and $\theta$ satisfy (\ref{transition-kernel}) and (\ref{stationary-measure}), respectively, taking $A = \log(P)$, we obtain that
\[
\mathcal{L}_{A^*}(1)(x_1) = \int_{\mathbb{R}}e^{A(x_1, x_2)}f(x_2)dx_2 = \int_{\mathbb{R}}P(x_1, x_2)f(x_2)dx_2 = 1 \,,
\]

which implies that $\lambda_A = 1$ and $\overline{\psi}_A \equiv 1$. Therefore, defining $\theta_A = \frac{\psi_A}{\pi_A}$, it follows that
\begin{align}
\int_{\mathbb{R}}\theta_A(x_1)P(x_1, x_2)f(x_1)dx_1
&= \int_{\mathbb{R}}\frac{\psi_A(x_1)}{\pi_A}e^{A(x_1, x_2)}f(x_1)dx_1 \nonumber \\
&= \frac{1}{\pi_A}\mathcal{L}_A (\psi_A)(x_2) \nonumber \\
&= \frac{\psi_A(x_2)}{\pi_A} \nonumber \\
&= \theta_A(x_2) \,. \nonumber
\end{align}

This implies that the measure induced by $P$ and $\theta$ is a Gibbs state for $A$.
\end{enumerate}
\end{proof}

\begin{proposition}
Let $\mu$ be an stationary Markov measure defined by $P$ and $\theta$, then the entropy of this measure, when  given by
\[
S(\theta P) = -\int_{\mathbb{R}^2}\theta(x_1)P(x_1, x_2)\log(P(x_1, x_2))f(x_2)f(x_1)dx_1dx_2 \,,
\]

coincides with the entropy given by the usual definition for Gibbs states.
\end{proposition}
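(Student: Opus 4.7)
The plan is to use the previous theorem, which identifies the stationary Markov measure $\mu$ as the Gibbs state associated with the potential $A(x_1,x_2)=\log P(x_1,x_2)$, together with the closed formula for the entropy of a Gibbs state that was derived right after equation (\ref{entropy-Gibbs}).

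First, I would recall from the proof of part (b) of the previous theorem that with $A = \log P$ we have the main eigenvalue $\lambda_A = 1$. Applying the general identity
\[
h(\mu_A) \;=\; -\int_{\mathbb{R}^{\mathbb{N}}} A \, d\mu_A + \log(\lambda_A)
\]
established in the paragraph following (\ref{entropy-Gibbs}), the entropy of our Markov measure therefore reduces to
\[
h(\mu) \;=\; -\int_{\mathbb{R}^{\mathbb{N}}} \log P(x_1,x_2)\, d\mu(x).
\]

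Next, since the integrand depends only on the first two coordinates, I would push the integral down to a two-dimensional one using the explicit description of $\mu$ on cylinders $[A_1A_2]$ given just before the theorem, namely
\[
\mu([A_1A_2]) \;=\; \int_{A_1\times A_2} \theta(x_1)P(x_1,x_2)f(x_1)f(x_2)\,dx_1\,dx_2.
\]
This yields directly
\[
\int_{\mathbb{R}^{\mathbb{N}}} \log P(x_1,x_2)\,d\mu(x) \;=\; \int_{\mathbb{R}^2} \theta(x_1)P(x_1,x_2)\log P(x_1,x_2)\,f(x_1)f(x_2)\,dx_1\,dx_2 \;=\; -S(\theta P),
\]
and the claim $h(\mu) = S(\theta P)$ follows at once.

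There is essentially no obstacle beyond verifying that the marginal identification above is exactly the two-dimensional projection of $\mu$; one should mention the Fubini argument and that $\log P$ is $\mu$-integrable (which is automatic because $A \in \mathcal{H}_\alpha(\mathbb{R}^{\mathbb{N}})$ implies boundedness of $A$ on the relevant set, and the paper's standing Hölder/boundedness framework guarantees the integrals are well defined). Thus the only genuine content of the proof is chaining the previous theorem's eigenvalue computation with the Gibbs-entropy formula derived from (\ref{entropy-Gibbs}), and then unfolding the definition of the stationary Markov measure on the two-cylinder.
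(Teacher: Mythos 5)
Your proposal is correct and follows essentially the same route as the paper: invoke part (b) of the preceding theorem to identify $\mu$ as the Gibbs state for the normalized potential $A=\log(P)$ (with $\lambda_A=1$), apply the entropy formula derived from (\ref{entropy-Gibbs}) to get $h(\mu)=-\int \log(P)\,d\mu$, and then unfold $\mu$ on the two-cylinder to obtain $S(\theta P)$. The paper's proof is just a terser version of exactly this chain of identities.
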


\begin{proof}
Set $\mu$ an stationary Markov measure induced by $P$ and $\theta$, then, it  follows of part $b)$ of the claim of above Theorem that $\mu$ is a Gibbs measure associated to the normalized potential $A = \log(P)$, therefore the entropy of this measure is given by
\begin{align}
h(\mu)
&= -\int_{\mathbb{R}^2}\log(P(x_1, x_2))d\mu(x_1, x_2) \nonumber \\
&= -\int_{\mathbb{R}^2}\log(P(x_1, x_2))\theta(x_1)P(x_1, x_2)f(x_1)f(x_2)dx_2dx_1 \,. \nonumber
\end{align}
\end{proof}

Now we will make some observations about differentiability of eigenfunction of the Ruelle operator associated to a differentiable potential $A$ that depends of two coordinates. 

Observe that defining $G(x_1) = \int_{\mathbb{R}} e^{A(y_1, x_1)}\varphi(y_1)f(y_1)dy_1$, with $\varphi: \mathbb{R} \to \mathbb{R}$ an $\alpha$-H\"older continuous function, and using the fact that $e^{A(y_1, \cdot)}\varphi(y_1)f(y_1)$ is integrable for all $x_1 \in \mathbb{R}$ and differentiable for each $y_1 \in \mathbb{R}$, it follows from (\ref{derivative-involution-kernel}) that
\[
\frac{\partial G}{\partial x_1}(x_1) = \int_{\mathbb{R}}e^{A(y_1, x_1)}\frac{\partial A}{\partial x_1}(y_1, x_1)\varphi(y_1)f(y_1)dy_1 \,.
\]

Furthermore, in particular when $\varphi = \psi_A$, we obtain
\begin{align}
\frac{\partial \psi_A}{\partial x_1}(x_1)
&= \frac{1}{\lambda_A}\int_{\mathbb{R}}e^{A(y_1, x_1)}\frac{\partial A}{\partial x_1}(y_1, x_1)\psi_A(y_1)f(y_1)dy_1 \nonumber \\
&= \frac{1}{\lambda_A}\int_{\mathbb{R}}e^{A(y_1, x_1)}\frac{\partial A}{\partial x_1}(y_1, x_1)d\rho_{A^*}(y_1) \nonumber \,.
\end{align}

Which coincides with the derivative obtained in the general case using the involution kernel approach.

\section{\emph{FKG}-inequalities}
\label{FKG-inequalities-section}

\emph{DLR}-Gibbs probability measures are an interesting topic in Statistical Mechanics. In this section we are going to show existence of this type of measures in a similar fashion as  in  sections \ref{Ruelle-operator-section} and \ref{Involution-kernel-section}. Moreover, we are going to show that these probability measures satisfy a \emph{FKG}-inequality and we will show the connection between \emph{DLR}-Gibbs probability measures, Thermodynamic limit Gibbs probability measures and Gibbs states such as was defined in section \ref{Ruelle-operator-section}.

If $x, y \in \mathbb{R}^{\mathbb{N}}$, we say that $x \preceq y$ if, and only if $x_i \leq y_i$ for each $i \in \mathbb{N}$. Using the above definition we say that a function $\varphi : \mathbb{R}^{\mathbb{N}} \to \mathbb{R}$ is an increasing function, if for any pair $x, y \in \mathbb{R}^{\mathbb{N}}$ such that $x \preceq y$, we have $\varphi(x) \leq \varphi(y)$.

For each $n \in \mathbb{N}$, $t \in \mathbb{R}$, and $x, y, z \in \mathbb{R}^{\mathbb{N}}$ we are going to use the following notation
\begin{align}
[x|y]_n &= (x_1, \ldots, x_n, y_{n+1}, y_{n+2}, \ldots) \in \mathbb{R}^{\mathbb{N}} \nonumber \,, \\
[x|t|y]_n &= (x_1, \ldots, x_n, t, y_{n+2}, y_{n+3}, \ldots) \in \mathbb{R}^{\mathbb{N}} \nonumber \,, \\
[x|y|z]_{n, n+r} &= (x_1, \ldots, x_n, y_{n+1}, \ldots, y_{n+r}, z_{n+r+1}, z_{n+r+2}, \ldots) \in \mathbb{R}^{\mathbb{N}} \nonumber \,, \\
[t|y]_1 &= (t, y_2, y_3, \ldots) \in \mathbb{R}^{\mathbb{N}} \nonumber \,.
\end{align}

Note that under the above notation $[t|\sigma^n(y)]_1 = (t, y_{n+2}, y_{n+3}, \ldots) \in \mathbb{R}^{\mathbb{N}}$.

Fixing $A \in \mathcal{H}_{\alpha}(\mathbb{R}^{\mathbb{N}})$, $y \in \mathbb{R}^{\mathbb{N}}$, and $n \in \mathbb{N}$, we define the probability measure $\mu^y_n$ on all the Borelian sets in $\mathbb{R}^{\mathbb{N}}$, assigning to each $E \subset \mathbb{R}^{\mathbb{N}}$ the value
\begin{equation}
\mu^y_n(E) = \frac{1}{Z^y_n}\int_{\mathbb{R}^n}e^{S_n A([x|y]_n)}\chi_E([x|y]_n)f(x_1) \ldots f(x_n)dx_1 \ldots dx_n \,,
\label{probability-finite-set}
\end{equation}

with $Z^y_n = \int_{\mathbb{R}^n}e^{S_n A([x|y]_n)}f(x_1) \ldots f(x_n)dx_1 \ldots dx_n$. In this case, the integral of any $\alpha$-H\"older continuous function $\varphi : \mathbb{R}^{\mathbb{N}} \to \mathbb{R}$ is defined by
\begin{align}
\int_{\mathbb{R}^{\mathbb{N}}} \varphi d\mu^y_n
&= \int_{\mathbb{R}^{\mathbb{N}}} \varphi([x|y]_n) d\mu^{[x|y]_n}_n(x) \nonumber \\
&= \frac{1}{Z^y_n}\int_{\mathbb{R}^n}e^{S_n A([x|y]_n)}\varphi([x|y]_n)f(x_1) \ldots f(x_n)dx_1 \ldots dx_n \,.
\end{align}

In the next Lemma we are going to show that the family of probability measures defined in (\ref{probability-finite-set}) with $n \in\mathbb{N}$ and $y \in\ \mathbb{R}^{\mathbb{N}}$, is in fact a Gibbsian specification, with kernel $K_n(E, y) = \mu^y_n(E)$.

\begin{lemma}
The family $K_n : (\mathcal{B}, \mathbb{R}^{\mathbb{N}}) \to [0, 1]$ with $n \in \mathbb{N}$, defined by
\[
K_n(E, y) = \mu^y_n(E) \,,
\]

is a Gibbsian specification, in other words, $(K_n)_{n \in \mathbb{N}}$ satisfies the following properties
\begin{enumerate}[a)]
\item The map $y \mapsto K_n(E, y)$ is $\sigma^n\mathcal{B}$-measurable for any $E \in \mathcal{B}$.
\item The map $E \mapsto K_n(E, y)$ is a probability measure for each $y \in \mathbb{R}^{\mathbb{N}}$.
\item For any $n \in \mathbb{N}$, $r \in \mathbb{N} \cup \{0\}$, and any $\alpha$-H\"older continuous function $\varphi : \mathbb{R}^{\mathbb{N}} \to \mathbb{R}$ we have the compatibility condition
\[
K_{n+r}(\varphi, z) = K_{n+r}(K_n(\varphi, y), z) \,,
\]
with $K_n(\varphi, y) = \int_{\mathbb{R}^{\mathbb{N}}}\varphi d\mu^y_n$.
\end{enumerate}
\label{Gibbsian-specification}
\end{lemma}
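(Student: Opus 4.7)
The overall plan: items (a) and (b) follow routinely from the explicit formula defining $\mu^y_n(E)$, while item (c) is the content and will follow from a direct Fubini computation together with the splitting of the Birkhoff sum $S_{n+r}A$ into a part depending on the first $n$ coordinates and a part depending only on the tail.

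For (a), observe that the integrand $e^{S_n A([x|y]_n)} \chi_E([x|y]_n) f(x_1)\cdots f(x_n)$ depends on $y$ only through the tail coordinates $y_{n+1}, y_{n+2}, \ldots$, since the composition point $[x|y]_n$ only uses those entries of $y$. By Fubini the $x$-integral is jointly Borel and thus $\sigma^n\mathcal{B}$-measurable in $y$; dividing by the normalization $Z^y_n$, which is strictly positive (bounded below by $e^{-n\|A\|_\infty}$ since $\int f = 1$) and also $\sigma^n\mathcal{B}$-measurable, preserves this property. Item (b) is immediate from the definition: countable additivity in $E$ follows from the monotone convergence theorem applied to the defining integral, and $\mu^y_n(\mathbb{R}^{\mathbb{N}}) = Z^y_n/Z^y_n = 1$.

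The core work is (c). Set $M = n+r$ and expand the right-hand side,
\[
K_M(K_n(\varphi,\cdot), z) = \frac{1}{Z^z_M}\int_{\mathbb{R}^M} e^{S_M A([v|z]_M)} K_n(\varphi, [v|z]_M) \prod_{i=1}^M f(v_i)\, dv_1\cdots dv_M.
\]
Because $K_n(\varphi, y)$ only sees the tail of $y$ beyond index $n$, the factor $K_n(\varphi, [v|z]_M)$ is an integral over auxiliary variables $x_1, \ldots, x_n \in \mathbb{R}$ against $e^{S_n A(x_1,\ldots,x_n,v_{n+1},\ldots,v_M,z_{M+1},\ldots)}\prod_{i=1}^n f(x_i)$, divided by $Z^{[v|z]_M}_n$. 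Now decompose $S_M A([v|z]_M) = S_n A([v|z]_M) + T$, where $T := \sum_{k=n}^{M-1} A(\sigma^k([v|z]_M))$ depends only on $v_{n+1}, \ldots, v_M, z_{M+1}, \ldots$. Apply Fubini (justified by positivity and boundedness of $A$ and $\varphi$) and integrate out $v_1, \ldots, v_n$ first; by the definition of $Z^y_n$ this inner integration yields exactly $Z^{[v|z]_M}_n$, which cancels the denominator inside $K_n(\varphi, \cdot)$. Relabelling $u_i := x_i$ for $1 \le i \le n$ and $u_i := v_i$ for $n < i \le M$, the surviving exponent reassembles as $S_n A([u|z]_M) + T = S_M A([u|z]_M)$, so the expression collapses into the definition of $K_M(\varphi, z)$.

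The main obstacle is purely bookkeeping: keeping clear track of which coordinates of the nested configuration $[x|[v|z]_M]_n$ each factor actually sees, and in particular verifying that the tail term $T$ is genuinely independent of the variables $v_1, \ldots, v_n$ being integrated out first. Once that dependence is isolated, the cancellation of the normalizer $Z^{[v|z]_M}_n$ against the $1/Z^y_n$ hidden inside $K_n$ is the substantive content of the compatibility condition.
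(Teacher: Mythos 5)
Your proposal is correct and follows essentially the same route as the paper: the decomposition $S_{n+r}A = S_nA + T$ with $T$ depending only on the coordinates beyond index $n$ is precisely the cocycle identity $S_{n+r}A([y|z]_{n+r}) + S_nA([x|y|z]_{n,n+r}) = S_{n+r}A([x|y|z]_{n,n+r}) + S_nA([y|z]_{n+r})$ used in the paper, and both arguments then integrate out the replaced block via Fubini so that the inner normalizer $Z^{[v|z]_{n+r}}_n$ cancels. The only cosmetic difference is that you work with the normalized kernels directly while the paper compares the two unnormalized integrals of $e^{S_{n+r}A}\varphi$ and $e^{S_{n+r}A}\psi$, and for part (a) you argue tail-dependence of the integrand directly rather than quoting the formula $K_n(E,y)=\mathcal{L}^n_A(\chi_E)(\sigma^n(y))/\mathcal{L}^n_A(1)(\sigma^n(y))$.
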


\begin{proof}
Note that for each $n \in \mathbb{N}$ and $y \in \mathbb{R}^{\mathbb{N}}$ we have
\[
K_n(E, y) = \frac{\mathcal{L}^n_A(\chi_E)(\sigma^n(y))}{\mathcal{L}^n_A(1)(\sigma^n(y))} \,,
\]
which implies that the map $y \mapsto K_n(E, y)$ is $\sigma^n\mathcal{B}$-measurable for any $E \in \mathcal{B}$, therefore part $a)$ of this Lemma is obtained. Part $b)$ is obvious by the definition of $K_n$. In order to demonstrate part $c)$ of this Lemma, we observe that the equation $K_{n+r}(\varphi, z) = K_{n+r}(K_n(\varphi, y), z)$ is a direct consequence of the following equality
\[
\int_{\mathbb{R}^{\mathbb{N}}} \varphi([y|z]_{n+r}) d\mu^{[y|z]_{n+r}}_{n+r}(y) = \int_{\mathbb{R}^{\mathbb{N}}} \left(\int_{\mathbb{R}^{\mathbb{N}}} \varphi([x|y|z]_{n, n+r})d\mu^{[x|y|z]_{n, n+r}}_n(x)\right) d\mu^{[y|z]_{n+r}}_{n+r}(y) \,.
\]

Therefore, defining
\[
\psi([y|z]_{n+r}) = \int_{\mathbb{R}^{\mathbb{N}}} \varphi([x|y|z]_{n, n+r})d\mu^{[x|y|z]_{n, n+r}}_n(x) \,,
\]

it will be enough to show that
\[
\int_{\mathbb{R}^{\mathbb{N}}} \varphi([y|z]_{n+r}) d\mu^{[y|z]_{n+r}}_{n+r}(y) = \int_{\mathbb{R}^{\mathbb{N}}} \psi([y|z]_{n+r}) d\mu^{[y|z]_{n+r}}_{n+r}(y)\,,
\]

which is equivalent to show that
\begin{align}
&\int_{\mathbb{R}^{n+r}} e^{S_{n+r}A([y|z]_{n+r})}\varphi([y|z]_{n+r}) f(y_1) \ldots f(y_{n+r}) dy_1 \ldots dy_{n+r} \nonumber \\
&= \int_{\mathbb{R}^{n+r}} e^{S_{n+r}A([y|z]_{n+r})}\psi([y|z]_{n+r}) f(y_1) \ldots f(y_{n+r}) dy_1 \ldots dy_{n+r} \nonumber \,.
\end{align}

Indeed, observe that
\begin{align}
&\int_{\mathbb{R}^{n+r}} e^{S_{n+r}A([y|z]_{n+r})}\psi([y|z]_{n+r}) f(y_1) \ldots f(y_{n+r}) dy_1 \ldots dy_{n+r} \nonumber \\
&= \int_{\mathbb{R}^{n+r}} \frac{1}{Z^{[y|z]_{n+r}}} \nonumber \\
&\ \ \ \ \left(\int_{\mathbb{R}^n} e^{ S_{n+r}A([y|z]_{n+r}) + S_n A([x|y|z]_{n,n+r})} \varphi([x|y|z]_{n,n+r}) f(x_1) \ldots f(x_n) dx_1 \ldots dx_n\right) \nonumber \\
&\ \ \ \ f(y_1) \ldots f(y_{n+r}) dy_1 \ldots dy_{n+r} \nonumber \,.
\end{align}
Now, using the fact that any ergodic sum satisfies
\[
S_{n+r}A([y|z]_{n+r}) + S_n A([x|y|z]_{n,n+r}) = S_{n+r}A([x|y|z]_{n,n+r}) + S_n A([y|z]_{n+r}) \,,
\]
it follows that the above integral is equal to
\begin{align}
&\int_{\mathbb{R}^{n+r}} \frac{1}{Z^{[y|z]_{n+r}}} \nonumber \\
&\ \ \ \ \left(\int_{\mathbb{R}^n} e^{ S_{n+r}A([x|y|z]_{n,n+r}) + S_n A([y|z]_{n+r})} \varphi([x|y|z]_{n,n+r}) f(x_1) \ldots f(x_n) dx_1 \ldots dx_n\right) \nonumber \\
&\ \ \ \ f(y_1) \ldots f(y_{n+r}) dy_1 \ldots dy_{n+r} \nonumber \\
&= \int_{\mathbb{R}^r} \bigl(\int_{\mathbb{R}^n} \frac{1}{Z^{[y|z]_{n+r}}} \nonumber \\
&\ \ \ \ \left(\int_{\mathbb{R}^n} e^{ S_{n+r}A([x|y|z]_{n,n+r}) + S_n A([y|z]_{n+r})} \varphi([x|y|z]_{n,n+r}) f(x_1) \ldots f(x_n) dx_1 \ldots dx_n\right) \nonumber \\
&\ \ \ \ f(y_1) \ldots f(y_n) dy_1 \ldots dy_n\bigr) f(y_{n+1}) \ldots f(y_{n+r}) dy_{n+1} \ldots dy_{n+r} \nonumber \\
&= \int_{\mathbb{R}^r} \left(\int_{\mathbb{R}^n} \frac{e^{S_n A([y|z]_{n+r})}}{Z^{[y|z]_{n+r}}} f(y_1) \ldots f(y_n) dy_1 \ldots dy_n\right) \nonumber \\
&\ \ \ \ \left(\int_{\mathbb{R}^n} e^{ S_{n+r}A([x|y|z]_{n,n+r})} \varphi([x|y|z]_{n,n+r}) f(x_1) \ldots f(x_n) dx_1 \ldots dx_n\right) \nonumber \\
&\ \ \ \ f(y_{n+1}) \ldots f(y_{n+r}) dy_{n+1} \ldots dy_{n+r} \nonumber 
\end{align}

\begin{align}
&= \int_{\mathbb{R}^r} \left(\int_{\mathbb{R}^n} e^{ S_{n+r}A([x|y|z]_{n,n+r})} \varphi([x|y|z]_{n,n+r}) f(x_1) \ldots f(x_n) dx_1 \ldots dx_n\right) \nonumber \\
&\ \ \ \ f(y_{n+1}) \ldots f(y_{n+r}) dy_{n+1} \ldots dy_{n+r} \nonumber \\
&= \int_{\mathbb{R}^{n+r}} e^{ S_{n+r}A([y|z]_{n+r})} \varphi([y|z]_{n+r}) f(y_1) \ldots f(y_{n+r}) dy_1 \ldots dy_{n+r} \nonumber \,.
\end{align}

This concludes our proof.
\end{proof}

Fixing a Gibbsian specification $(K_n)_{n \in \mathbb{N}}$ determined by an $\alpha$-H\"older continuous potential $A$, we say that a probability measure $\mu$ is a \emph{DLR}-Gibbs probability measure associated to $A$, if $E_{\mu}(\varphi|\sigma^n\mathcal{B})(y) = K_n(\varphi, y)$ for almost every point $y \in \mathbb{R}^{\mathbb{N}}$, any $\alpha$-H\"older continuous function $\varphi$ and each $n \in \mathbb{N}$. The set of all such $\mu$ will be denoted from now on by $\mathcal{G}^{DLR}(A)$.

On the other hand, the set of Thermodynamic limit Gibbs probability measures, denoted by $\mathcal{G}^{TL}(A)$, is defined as the closure of the convex hull of the set of cluster points of $\{K_n(\cdot, y_n) : n \in \mathbb{N}, y_n \in \mathbb{R}^{\mathbb{N}}\}$.

The next Lemma shows that $\mathcal{G}^{TL}(A) \subset \mathcal{G}^{DLR}(A)$ using a classical approach in statistical mechanics known as \emph{DLR}-equations.

\begin{lemma}
Consider $(K_n)_{n \in \mathbb{N}}$ be the Gibbsian specification determined by kernels of the form $K_n(\varphi, z) = \int_{\mathbb{R}^{\mathbb{N}}}\varphi d\mu^z_n$. If there exists a sequence of natural numbers $(n_j)_{j \in \mathbb{N}}$, such that $\lim_{j \in\mathbb{N}} K_{n_j}(\cdot, z) = \lim_{j \in\mathbb{N}} \mu^z_{n_j} = \mu^z$ in the weak* topology. Then, for any $\alpha$-H\"older continuous function $\varphi : \mathbb{R}^{\mathbb{N}} \to \mathbb{N}$ we have
\[
\int_{\mathbb{R}^{\mathbb{N}}} \varphi(y) d\mu^z(y) = \int_{\mathbb{R}^{\mathbb{N}}} \left(\int_{\mathbb{R}^{\mathbb{N}}} \varphi([x|y]_n) d\mu^{[x|y]_n}_n(x)\right) d\mu^z(y) \,.
\]
\end{lemma}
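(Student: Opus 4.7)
The plan is to derive the identity by passing to the weak* limit in the compatibility equation of Lemma \ref{Gibbsian-specification}(c). Fix $n \in \mathbb{N}$ and, for $j$ large enough that $n_j > n$, set $r_j = n_j - n$; write
\[
\psi(y) := K_n(\varphi, y) = \int_{\mathbb{R}^{\mathbb{N}}} \varphi([x|y]_n) \, d\mu^{[x|y]_n}_n(x).
\]
The compatibility condition with this choice of indices gives $K_{n_j}(\varphi, z) = K_{n_j}(\psi, z)$. My strategy is: (i) check that $\psi$ is a bounded continuous function on $\mathbb{R}^{\mathbb{N}}$; (ii) apply the weak* convergence $\mu^z_{n_j} \to \mu^z$ to both sides of this equality; (iii) unfold the definition of $\psi$ to recognize the claimed identity.

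For step (i), boundedness is immediate: any $\alpha$-H\"older function on $\mathbb{R}^{\mathbb{N}}$ is bounded (the metric $d$ has diameter at most $1$), so $\|\psi\|_{\infty} \leq \|\varphi\|_{\infty}$. Continuity of $\psi$ in $y$ follows from the explicit expression
\[
\psi(y) = \frac{\int_{\mathbb{R}^n} e^{S_n A([x|y]_n)} \varphi([x|y]_n) f(x_1)\cdots f(x_n) \, dx_1 \cdots dx_n}{\int_{\mathbb{R}^n} e^{S_n A([x|y]_n)} f(x_1)\cdots f(x_n) \, dx_1 \cdots dx_n}
\]
by a dominated convergence argument: for each fixed $x \in \mathbb{R}^n$ the integrand depends continuously on $y$ (since $A$ and $\varphi$ are H\"older and $y \mapsto [x|y]_n$ is continuous), and it is uniformly dominated by the integrable function $e^{n\|A\|_{\infty}}\|\varphi\|_{\infty} f(x_1)\cdots f(x_n)$. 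The denominator is bounded below by $e^{-n\|A\|_{\infty}}$, so $\psi \in \mathcal{C}_b(\mathbb{R}^{\mathbb{N}})$.

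For step (ii), since both $\varphi$ and $\psi$ lie in $\mathcal{C}_b(\mathbb{R}^{\mathbb{N}})$, the hypothesis $\mu^z_{n_j} \to \mu^z$ in the weak* topology yields
\[
K_{n_j}(\varphi, z) = \int_{\mathbb{R}^{\mathbb{N}}} \varphi \, d\mu^z_{n_j} \longrightarrow \int_{\mathbb{R}^{\mathbb{N}}} \varphi \, d\mu^z,
\]
\[
K_{n_j}(\psi, z) = \int_{\mathbb{R}^{\mathbb{N}}} \psi \, d\mu^z_{n_j} \longrightarrow \int_{\mathbb{R}^{\mathbb{N}}} \psi \, d\mu^z.
\]
Equating the two limits and unfolding $\psi$ produces precisely the asserted equation.

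The only technical point that requires care is the continuity of $\psi$, which is the step where one needs to exploit both the regularity of $A$ and the integrability of $f$. Once this is in hand, the rest of the argument is a purely formal passage to the weak* limit in the compatibility condition already established in Lemma \ref{Gibbsian-specification}.
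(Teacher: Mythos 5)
Your proof is correct and follows essentially the same route as the paper: fix $n$, invoke the compatibility condition of Lemma \ref{Gibbsian-specification} along the subsequence $n_j = n + r_j$, and pass to the weak* limit on both sides. The only difference is that you explicitly verify that $\psi = K_n(\varphi,\cdot)$ lies in $\mathcal{C}_b(\mathbb{R}^{\mathbb{N}})$ --- a hypothesis needed to legitimize the limit on the right-hand side, which the paper uses tacitly.
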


\begin{proof}
By the Lemma (\ref{Gibbsian-specification}) we have
\[
\int_{\mathbb{R}^{\mathbb{N}}} \varphi([y|z]_{n+r}) d\mu^{[y|z]_{n+r}}_{n+r}(y) = \int_{\mathbb{R}^{\mathbb{N}}} \psi([y|z]_{n+r}) d\mu^{[y|z]_{n+r}}_{n+r}(y) \,,
\]

with $\psi([y|z]_{n+r}) = \int_{\mathbb{R}^{\mathbb{N}}} \varphi([x|y|z]_{n, n+r})d\mu^{[x|y|z]_{n, n+r}}_n(x)$. Therefore, taking the limit when $r$ goes to infinity in both of the sides of the equation, we obtain that
\[
\int_{\mathbb{R}^{\mathbb{N}}} \varphi(y) d\mu^z(y) = \int_{\mathbb{R}^{\mathbb{N}}} \psi(y) d\mu^z(y) \,.
\]

In other words
\[
\int_{\mathbb{R}^{\mathbb{N}}} \varphi(y) d\mu^z(y) = \int_{\mathbb{R}^{\mathbb{N}}} \left(\int_{\mathbb{R}^{\mathbb{N}}} \varphi([x|y]_n) d\mu^{[x|y]_n}_n(x)\right) d\mu^z(y) \,.
\]
\end{proof}

In the next Theorem we will show the relation between the conformal probability measure and the Gibbs state associated to the Ruelle operator $\mathcal{L}_A$, with the set of \emph{DLR}-Gibbs probability measures associated to $A$.

\begin{theorem}
Suppose that $A \in \mathcal{H}_{\alpha}(\mathbb{R}^{\mathbb{N}})$ and $(K_n)_{n \in \mathbb{N}}$ a Gibbsian specification such as defined in Lemma \ref{Gibbsian-specification}. Then, the eigenmeasure $\rho_A$ and the Gibbs state $\mu_A$, such as were defined in Theorem \ref{Ruelle}, belongs to $\mathcal{G}^{DLR}(A)$.
\end{theorem}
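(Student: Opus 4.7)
The DLR condition to be verified is that, for every $n \in \mathbb{N}$, every $\alpha$-H\"older $\varphi$, and every bounded $\sigma^n\mathcal{B}$-measurable test function $g$ --- necessarily of the form $g = G \circ \sigma^n$ for some bounded Borel $G$ ---
\[
\int_{\mathbb{R}^{\mathbb{N}}} g \cdot \varphi \, d\mu \;=\; \int_{\mathbb{R}^{\mathbb{N}}} g \cdot K_n(\varphi, \cdot) \, d\mu
\]
holds for $\mu = \rho_A$ and for $\mu = \mu_A$. The plan is to recall the operator form of the kernel already established inside the proof of Lemma \ref{Gibbsian-specification}, namely
\[
K_n(\varphi, y) \;=\; \frac{\mathcal{L}_A^n(\varphi)(\sigma^n(y))}{\mathcal{L}_A^n(1)(\sigma^n(y))} \;=\; (H \circ \sigma^n)(y), \qquad H := \frac{\mathcal{L}_A^n(\varphi)}{\mathcal{L}_A^n(1)},
\]
and then to manipulate each integral through the Ruelle operator using the dual relation $\mathcal{L}_A^* \rho_A = \lambda_A \rho_A$.

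The central algebraic tool is the intertwining identity $\mathcal{L}_A^n\bigl((G \circ \sigma^n) \cdot \xi\bigr) = G \cdot \mathcal{L}_A^n(\xi)$, obtained by factoring $G \circ \sigma$ through one step of $\mathcal{L}_A$ and iterating. Combined with the conformality $\int \mathcal{L}_A^n(\xi)\, d\rho_A = \lambda_A^n \int \xi \, d\rho_A$, this produces the master transfer identity
\[
\int (G \circ \sigma^n) \cdot \xi \, d\rho_A \;=\; \lambda_A^{-n} \int G \cdot \mathcal{L}_A^n(\xi) \, d\rho_A,
\]
valid for every bounded measurable $\xi$ and $G$.

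For the conformal measure $\rho_A$ I would apply the master identity twice. With $\xi = \varphi$ the left-hand side of the DLR equation becomes $\lambda_A^{-n} \int G \cdot \mathcal{L}_A^n(\varphi) \, d\rho_A$. For the right-hand side, I rewrite $(G \circ \sigma^n)(H \circ \sigma^n) = ((GH) \circ \sigma^n) \cdot 1$ and apply the master identity with $\xi = 1$ and observable $GH$, obtaining $\lambda_A^{-n} \int GH \cdot \mathcal{L}_A^n(1) \, d\rho_A = \lambda_A^{-n} \int G \cdot \mathcal{L}_A^n(\varphi) \, d\rho_A$ by the very definition of $H$. The two sides coincide, so $\rho_A \in \mathcal{G}^{DLR}(A)$.

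For the Gibbs state $\mu_A$ I would substitute $d\mu_A = \psi_A \, d\rho_A$, turning the DLR identity into an equality of $\rho_A$-integrals weighted by $\psi_A$. Applying the master identity to each side --- with $\xi = \varphi \psi_A$ on the left, and with the factor $H \circ \sigma^n$ paired with $\psi_A$ on the right, invoking in addition the eigenfunction equation $\mathcal{L}_A^n(\psi_A) = \lambda_A^n \psi_A$ --- reduces the matter to an identity relating $\mathcal{L}_A^n(\varphi \psi_A)$, $\psi_A$, $\mathcal{L}_A^n(\varphi)$ and $\mathcal{L}_A^n(1)$, up to the factor $\lambda_A^n$. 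This is the main obstacle: arranging the $\psi_A$-weight so that it passes cleanly through both the intertwining and the quotient structure of $H$ requires delicate bookkeeping that is not merely a transcription of the $\rho_A$ case, and must exploit the multiplicative link $\mathcal{L}_A^n(\psi_A) = \lambda_A^n \psi_A$ together with the definition of $H$. Once this point is settled for $\alpha$-H\"older $\varphi$, the extension to arbitrary bounded Borel $\varphi$ follows from a standard monotone class / dominated convergence argument, completing the claim $\rho_A, \mu_A \in \mathcal{G}^{DLR}(A)$.
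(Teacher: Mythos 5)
Your argument for $\rho_A$ is correct and complete: the intertwining identity $\mathcal{L}_A^n\bigl((G\circ\sigma^n)\xi\bigr)=G\,\mathcal{L}_A^n(\xi)$ combined with $(\mathcal{L}_A^*)^n\rho_A=\lambda_A^n\rho_A$ is exactly what is needed, and by testing against arbitrary $G\circ\sigma^n$ you verify the full conditional-expectation identity, whereas the paper writes out only the case $G\equiv 1$ and reaches the conclusion through $d\rho_A=\psi_A^{-1}d\mu_A$, the $\sigma$-invariance of $\mu_A$ and the compatibility condition of Lemma \ref{Gibbsian-specification}. For that half your route is a legitimate, and arguably cleaner, alternative.

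The $\mu_A$ half has a real gap, and the ``delicate bookkeeping'' you postpone cannot be carried out for the kernel you are using. Running your own master identity with $d\mu_A=\psi_A\,d\rho_A$ gives, for every bounded $G$,
\[
\int_{\mathbb{R}^{\mathbb{N}}} (G\circ\sigma^n)\,\varphi\,d\mu_A=\lambda_A^{-n}\int_{\mathbb{R}^{\mathbb{N}}} G\,\mathcal{L}_A^n(\varphi\psi_A)\,d\rho_A \,,
\]
and the same computation applied to a candidate kernel shows that
\[
E_{\mu_A}\bigl(\varphi \,\big|\, \sigma^n\mathcal{B}\bigr)(y)=\frac{\mathcal{L}_A^n(\varphi\psi_A)(\sigma^n(y))}{\mathcal{L}_A^n(\psi_A)(\sigma^n(y))}\,,
\]
which is the kernel of the specification built from the \emph{normalized} potential $\bar A=A+\log(\psi_A)-\log(\psi_A\circ\sigma)-\log(\lambda_A)$ (since $\mathcal{L}_{\bar A}^n(\varphi)=\lambda_A^{-n}\psi_A^{-1}\mathcal{L}_A^n(\varphi\psi_A)$), and not your $H\circ\sigma^n$ with $H=\mathcal{L}_A^n(\varphi)/\mathcal{L}_A^n(1)$. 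The two kernels coincide only when $\psi_A$ is constant, because the weight $\psi_A([x|y]_n)$ depends on the coordinates being integrated out; no rearrangement of $\psi_A$ through the quotient structure of $H$ will close the identity you are left with. The paper's way out is not bookkeeping but a change of specification: for $\mu_A$ it repeats the $\rho_A$ argument with $A$ replaced by $\bar A$, for which $\mu_A$ is the conformal measure with eigenvalue $1$ and eigenfunction $1$ (i.e.\ $\mathcal{L}_{\bar A}^*\mu_A=\mu_A$), so that $\mu_A$ satisfies the DLR equations for the Gibbsian specification determined by the H\"older potential $\bar A$. You should either do the same, or prove separately that the specifications of $A$ and $\bar A$ admit the same DLR measures --- a nontrivial claim that the paper does not establish and that your outline does not supply.
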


\begin{proof}
In the first case it is enough to show  that $E_{\rho_A}(\varphi | \sigma^n \mathcal{B})(z) = K_n(\varphi, z)$ for almost every point $z \in \mathbb{R}^{\mathbb{N}}$, any $\alpha$-H\"older continuous function $\varphi$ and each $n \in\mathbb{N}$. Indeed,
\begin{align}
\int_{\mathbb{R}^{\mathbb{N}}} K_n(\varphi, z) d\rho_A(z)
&= \int_{\mathbb{R}^{\mathbb{N}}}\frac{\mathcal{L}^n_A (\varphi) (\sigma^n(z))}{\mathcal{L}^n_A (1) (\sigma^n(z))} d\rho_A(z) \nonumber \\
&= \int_{\mathbb{R}^{\mathbb{N}}}\frac{\mathcal{L}^n_A (\varphi) (\sigma^n(z))}{\mathcal{L}^n_A (1) (\sigma^n(z))} \frac{1}{\psi_A(z)} d\mu_A(z) \nonumber \\
&= \int_{\mathbb{R}^{\mathbb{N}}}\frac{1}{\lambda^n_A}\mathcal{L}^n_A \left(\frac{\mathcal{L}^n_A (\varphi) (\sigma^n(y))}{\mathcal{L}^n_A (1) (\sigma^n(y))}\right)(z) \frac{1}{\psi_A(z)} d\mu_A(z) \nonumber \\
&= \int_{\mathbb{R}^{\mathbb{N}}}\frac{1}{\lambda^n_A}\mathcal{L}^n_A \left(\frac{\mathcal{L}^n_A (\varphi) (\sigma^n(y))}{\mathcal{L}^n_A (1) (\sigma^n(y))}\right)(\sigma^n(z)) \frac{1}{\psi_A(\sigma^n(z))} d\mu_A(z) \nonumber \,.
\end{align}

Now, using the fact  that for any $n \in \mathbb{N}$ it is satisfied the equation $K_n(K_n(\varphi, y), z) = K_n(\varphi, z)$ (proved   in Lemma \ref{Gibbsian-specification}), it follows that the above integral is equal to
\begin{align}
&\int_{\mathbb{R}^{\mathbb{N}}}\frac{1}{\lambda^n_A}\mathcal{L}^n_A (\varphi)(\sigma^n(z)) \frac{1}{\psi_A(\sigma^n(z))} d\mu_A(z) \nonumber \\
&= \int_{\mathbb{R}^{\mathbb{N}}}\frac{1}{\lambda^n_A}\mathcal{L}^n_A (\varphi)(z) \frac{1}{\psi_A(z)} d\mu_A(z) \nonumber \\
&= \int_{\mathbb{R}^{\mathbb{N}}}\frac{1}{\lambda^n_A}\mathcal{L}^n_A (\varphi)(z) d\rho_A(z) \nonumber \\
&= \int_{\mathbb{R}^{\mathbb{N}}}\varphi(z) d\rho_A(z) \nonumber \,.
\end{align}

The foregoing implies that $\rho_A \in \mathcal{G}^{DLR}(A)$. The proof for $\mu_A$ follows a procedure similar to the above using the fact that $\mathcal{L}_{\overline{A}}\mu_A = \mu_A$ and also that $\overline{A}$ is an $\alpha$-H\"older continuous potential.
\end{proof}

The main result of this section is to show that probability measures associated to kernels of the Gibbsian specification (such as was defined above) have positive correlation, which is usually knowing as \emph{FKG}-inequality. The following Lemma provide necessary tools to prove this result.

\begin{lemma}
Let $\eta$ a Borel probability measure on $\mathbb{R}$. If $\varphi, \psi : \mathbb{R} \to \mathbb{R}$ are increasing integrable functions, then
\[
\int_{\mathbb{R}}\varphi\psi d\eta \geq \int_{\mathbb{R}} \varphi d\eta \cdot \int_{\mathbb{R}} \psi d\eta \,.
\]
\label{FKG-inequality-dim1}
\end{lemma}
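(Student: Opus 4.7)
The plan is to prove this one-dimensional FKG inequality by the classical \emph{coupling} (or Chebyshev sum) trick: pair two independent copies of the variable and exploit monotonicity of the integrand to get a pointwise sign.

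More precisely, I would work on the product space $\mathbb{R}\times\mathbb{R}$ equipped with the product measure $\eta\times\eta$, and consider the function
\[
F(x,y) = \bigl(\varphi(x)-\varphi(y)\bigr)\bigl(\psi(x)-\psi(y)\bigr) \,.
\]
The key observation, which requires no computation, is that $F(x,y)\geq 0$ for every pair $(x,y)\in\mathbb{R}^2$: if $x\geq y$ then both factors are $\geq 0$ because $\varphi$ and $\psi$ are increasing, while if $x\leq y$ both factors are $\leq 0$; in either case the product is non-negative. Hence
\[
\int_{\mathbb{R}\times\mathbb{R}} F(x,y)\,d(\eta\times\eta)(x,y) \geq 0 \,.
\]

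Now I would expand $F$ into four terms and apply Fubini together with the fact that $\eta$ is a probability measure, so that $\int_{\mathbb{R}} d\eta = 1$. The two ``diagonal'' terms $\varphi(x)\psi(x)$ and $\varphi(y)\psi(y)$ each contribute $\int_{\mathbb{R}}\varphi\psi\,d\eta$, while each of the two ``cross'' terms contributes $\int_{\mathbb{R}}\varphi\,d\eta\cdot\int_{\mathbb{R}}\psi\,d\eta$. Dividing by $2$ yields exactly
\[
\int_{\mathbb{R}}\varphi\psi\,d\eta \;\geq\; \int_{\mathbb{R}}\varphi\,d\eta \cdot \int_{\mathbb{R}}\psi\,d\eta \,.
\]

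The only technical point worth a brief mention is integrability: since $\varphi$ and $\psi$ are only assumed integrable (not necessarily bounded), the product $\varphi\psi$ may fail to be integrable. However, $F\geq 0$ so its integral is always well defined in $[0,+\infty]$, and the cross terms are finite by hypothesis; the conclusion therefore holds in the extended sense and, in particular, the right-hand side is a finite lower bound for $\int\varphi\psi\,d\eta$. There is no serious obstacle to this argument — it is a one-line coupling identity — and the whole point of isolating this lemma is that the corresponding multidimensional statement needed for the Gibbsian specification will be bootstrapped from it by an induction on coordinates.
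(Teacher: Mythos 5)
Your proof is correct and is the standard Chebyshev/coupling argument for the one-dimensional correlation inequality; the paper itself gives no details here and simply defers to the argument in \cite{CiLo17}, which is this same expansion of $\int\bigl(\varphi(x)-\varphi(y)\bigr)\bigl(\psi(x)-\psi(y)\bigr)\,d(\eta\times\eta)\geq 0$. Your remark on integrability (the cross terms are finite by hypothesis, and the nonnegativity of the integrand makes the diagonal terms well defined in the extended sense) correctly handles the only delicate point, so you have in effect supplied the proof the paper omits.
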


\begin{proof}
The proof is similar to the one in \cite{CiLo17}.
\end{proof}

The following Lemma shows a decomposition of $\mu^y_{n+1}$ in terms of $\mu^y_n$ and a suitable Borel measure defined on the set of real numbers.

\begin{lemma}
Let $y \in \mathbb{R}^{\mathbb{N}}$ and $\varphi \in \mathcal{H}_{\alpha}(\mathbb{R}^{\mathbb{N}})$. Then the following equation is valid for each $n \in \mathbb{N}$.
\[
\int_{\mathbb{R}}\left(\int_{\mathbb{R}^{\mathbb{N}}} \varphi([x|t|y]_n) d\mu^{[x|t|y]_n}_n(x) \right)d\eta(t) = \int_{\mathbb{R}^{\mathbb{N}}} \varphi([x|y]_{n+1}) d\mu^{[x|y]_{n+1}}_{n+1}(x) \,,
\]
with $\eta(E) = \frac{Z^{[y|t|y]_n}_n}{Z^y_{n+1}}\int_{\mathbb{R}} e^{A([t|\sigma^n(y)]_1)} \chi_E(t) f(t) dt$, for each mensurable set $E \subset \mathbb{R}$.
\end{lemma}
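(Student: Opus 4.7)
The plan is to show that both sides equal the same explicit $(n+1)$-fold integral against the reference measure $f(x_1)\cdots f(x_n)f(t)\,dx_1\cdots dx_n\,dt$, by unfolding the definitions and applying the additivity of the Birkhoff sum. The key ingredient is the cocycle identity
\[
S_{n+1}A([x|y]_{n+1}) = S_nA([x|t|y]_n) + A([t|\sigma^n(y)]_1),
\]
valid when $t = x_{n+1}$, which follows immediately from $\sigma^n([x|y]_{n+1}) = (x_{n+1},y_{n+2},y_{n+3},\ldots) = [t|\sigma^n(y)]_1$.

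For the left-hand side, I would first observe that $\mu^{[x|t|y]_n}_n$ depends on its upper index only through coordinates from position $n+1$ onwards, so $\mu^{[x|t|y]_n}_n = \mu^{[y|t|y]_n}_n$. Using the definition of this measure, the inner integral equals $(Z^{[y|t|y]_n}_n)^{-1}$ times the integral of $e^{S_nA([x|t|y]_n)}\varphi([x|t|y]_n)$ against $f(x_1)\cdots f(x_n)\,dx_1\cdots dx_n$. Inserting the explicit form of $d\eta(t)$, the factor $Z^{[y|t|y]_n}_n$ cancels exactly with the normalizer produced in the previous step, and Fubini's theorem yields the single $(n+1)$-fold integral
\[
\frac{1}{Z^y_{n+1}}\int_{\mathbb{R}^n\times\mathbb{R}} e^{S_nA([x|t|y]_n)+A([t|\sigma^n(y)]_1)}\varphi([x|t|y]_n)\,f(x_1)\cdots f(x_n)f(t)\,dx_1\cdots dx_n\,dt.
\]

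For the right-hand side, I would unfold $\mu^{[x|y]_{n+1}}_{n+1}$ by definition and then rename the last integration variable as $x_{n+1} = t$, so that $[x|y]_{n+1} = [x|t|y]_n$. The cocycle identity above then turns the exponent $S_{n+1}A([x|y]_{n+1})$ into $S_nA([x|t|y]_n) + A([t|\sigma^n(y)]_1)$, producing exactly the integral obtained on the left. The main obstacle is purely bookkeeping: one has to track carefully which coordinates are being integrated and which are fixed in the overloaded notations $[x|y]_n$, $[x|t|y]_n$, $[y|t|y]_n$, and $[t|\sigma^n(y)]_1$, and to verify that the somewhat ad-hoc definition of $\eta$ is precisely calibrated so that the $t$-dependent factor $Z^{[y|t|y]_n}_n$ it carries absorbs the normalizer coming from $\mu^{[y|t|y]_n}_n$. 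Beyond this, no deeper ingredient than Fubini is required.
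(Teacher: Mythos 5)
Your proposal is correct and follows essentially the same route as the paper: unfold the definitions of $\eta$ and $\mu^{[x|t|y]_n}_n$ so that the normalizer $Z^{[y|t|y]_n}_n$ cancels, apply Fubini, and use the identity $S_{n+1}A([x|y]_{n+1}) = S_nA([x|t|y]_n) + A([t|\sigma^n(y)]_1)$ with $t = x_{n+1}$ to recognize the resulting $(n+1)$-fold integral as the right-hand side. The only cosmetic difference is that the paper transforms the left side directly into the right side rather than meeting in the middle.
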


\begin{proof}
It follows from the definitions of $\eta$ and $\mu^y_n$ that
\begin{align}
&\int_{\mathbb{R}} \left(\int_{\mathbb{R}^{\mathbb{N}}} \varphi([x|t|y]_n) d\mu^{[x|t|y]_n}_n(x) \right) d\eta(t) \nonumber \\
&= \int_{\mathbb{R}} \frac{Z^{[y|t|y]_n}_n}{Z^y_{n+1}} e^{A([t|\sigma^n(y)]_1)}\left(\int_{\mathbb{R}^{\mathbb{N}}} \varphi([x|t|y]_n) d\mu^{[x|t|y]_n}_n(x) \right) f(t) dt \nonumber \\
&= \frac{1}{Z^y_{n+1}} \int_{\mathbb{R}} e^{A([t|\sigma^n(y)]_1)} \nonumber \\
& \ \ \ \ \left(\int_{\mathbb{R}^n} e^{S_n A([x|t|y]_n)} \varphi([x|t|y]_n) f(x_1) \ldots f(x_n) dx_1 \ldots dx_n \right) f(t) dt \nonumber \\
&= \frac{1}{Z^y_{n+1}} \int_{\mathbb{R}^{n+1}} e^{S_{n+1} A([x|y]_{n+1})} \varphi([x|y]_{n+1}) f(x_1) \ldots f(x_{n+1}) dx_1 \ldots dx_{n+1} \nonumber \\
&= \int_{\mathbb{R}^{\mathbb{N}}} \varphi([x|y]_{n+1}) d\mu^{[x|y]_{n+1}}_{n+1}(x) \nonumber \,.
\end{align}

We want to study the behavior of the map $t \mapsto \int_{\mathbb{R}^{\mathbb{N}}} \varphi d\mu^{[y|t|y]_n}_n$.  This is an important tool to show that the probability measures defined in (\ref{probability-finite-set}) have positive correlation. On this way, it is necessary to define an especial class of functions. We say that an $\alpha$-H\"older continuous potential $A : \mathbb{R}^{\mathbb{N}} \to \mathbb{R}$ belongs to the class $\mathcal{E}$, if it is continuously differentiable in each coordinate, and the derivative of the $n$-th ergodic sum regarding each coordinate defined by the map
\[
(x_1, x_2, \ldots) \mapsto \frac{d}{dt}S_n A([x|t|y]_n) \,,
\]

is increasing. Note that the function defined above depends only of its first $n$ coordinates.

On the other hand, fixing $n \in \mathbb{N}$, $y \in \mathbb{R}^{\mathbb{N}}$, and a potential $A$ in the class $\mathcal{E}$, we say that the probability measure $\mu^y_n$, such as was defined in (\ref{probability-finite-set}), satisfies the \emph{FKG}-inequality if
\begin{equation}
\int_{\mathbb{R}^{\mathbb{N}}} \varphi \psi d\mu^y_n \geq \int_{\mathbb{R}^{\mathbb{N}}} \varphi d\mu^y_n \cdot \int_{\mathbb{R}^{\mathbb{N}}} \psi d\mu^y_n \,.
\label{FKG-inequality}
\end{equation}

for any pair of increasing $\alpha$-H\"older continuous functions $\varphi, \psi$ from $\mathbb{R}^{\mathbb{N}}$ into $\mathbb{R}$ that depends only of its first $n$ coordinates.
\end{proof}

The following Lemma shows that under suitable conditions the map $t \mapsto \int_{\mathbb{R}^{\mathbb{N}}} \varphi d\mu^{[y|t|y]_n}_n$ is increasing.

\begin{lemma}
Let $n \in \mathbb{N}$ and $y \in \mathbb{R}^{\mathbb{N}}$ fixed. If the potential $A : \mathbb{R}^{\mathbb{N}} \to \mathbb{R}$ belongs to the class $\mathcal{E}$ and the probability measure $\mu^y_n$ satisfies (\ref{FKG-inequality}). Then, for any $\alpha$-H\"older continuous increasing function $\varphi : \mathbb{R}^{\mathbb{N}} \to \mathbb{R}$ that depends only of its first $n$ coordinates, the map
\[
t \mapsto \int_{\mathbb{R}^{\mathbb{N}}} \varphi d\mu^{[y|t|y]_n}_n \,,
\]

is a real increasing function.
\label{increasing-integral}
\end{lemma}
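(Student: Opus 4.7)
The plan is to introduce the real-valued function $F(t) := \int_{\mathbb{R}^{\mathbb{N}}} \varphi \, d\mu^{[y|t|y]_n}_n$ and to show that $F'(t) \geq 0$ for every $t \in \mathbb{R}$, from which the claim follows. The crucial preliminary observation is that, because $\varphi$ depends only on its first $n$ coordinates, the integrand $\varphi([x|t|y]_n) = \varphi(x_1,\ldots,x_n)$ is independent of $t$, so the only $t$-dependence of $F$ enters through the weight $e^{S_n A([x|t|y]_n)}$ appearing both in the numerator $\int_{\mathbb{R}^n} e^{S_n A([x|t|y]_n)}\varphi \prod f(x_i)\,dx$ and in the normalization $Z^{[y|t|y]_n}_n$.

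Next, I would differentiate $F(t) = N(t)/Z^{[y|t|y]_n}_n$ under the integral sign. Setting $G_t(x) := \frac{d}{dt} S_n A([x|t|y]_n)$, the hypothesis $A \in \mathcal{E}$ guarantees that $G_t$ is well-defined, depends only on the first $n$ coordinates of $x$, and is increasing in those coordinates. A direct application of the quotient rule gives
\[
F'(t) = \int_{\mathbb{R}^{\mathbb{N}}} \varphi \, G_t \, d\mu^{[y|t|y]_n}_n - \left(\int_{\mathbb{R}^{\mathbb{N}}} \varphi \, d\mu^{[y|t|y]_n}_n\right)\left(\int_{\mathbb{R}^{\mathbb{N}}} G_t \, d\mu^{[y|t|y]_n}_n\right),
\]
i.e.\ $F'(t)$ is the covariance of $\varphi$ and $G_t$ under the probability measure $\mu^{[y|t|y]_n}_n$.

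To conclude, I would apply the FKG hypothesis (\ref{FKG-inequality}) to the pair of increasing functions $\varphi$ and $G_t$, both depending only on the first $n$ coordinates, under the measure $\mu^{[y|t|y]_n}_n$. This forces the covariance on the right-hand side to be non-negative, so $F'(t) \geq 0$ for every $t$, and hence $F$ is increasing on $\mathbb{R}$, which is what the lemma asserts.

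The principal technical obstacle will be rigorously justifying the interchange of differentiation and integration defining $N'(t)$ and $(Z^{[y|t|y]_n}_n)'$; this requires a dominated-convergence argument supported by the continuous differentiability of $A$ in each coordinate supplied by the class $\mathcal{E}$, together with the integrability of the dominating function $e^{n\|A\|_\infty}\,\bigl|G_t(x)\bigr|\prod_i f(x_i)$. A secondary subtlety is that the FKG inequality (\ref{FKG-inequality}) is formally stated for the reference sequence $y$ of the lemma, while we apply it with the altered reference sequence $[y|t|y]_n$; since the measures $\mu^z_n$ have exactly the same structural form for every $z$, the standing assumption is understood to apply uniformly in the reference sequence, so this is not a genuine obstruction.
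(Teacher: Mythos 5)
Your proposal is correct and follows essentially the same route as the paper: both differentiate $t \mapsto \int \varphi\, d\mu^{[y|t|y]_n}_n$ via the quotient rule, identify the derivative as the covariance of $\varphi$ with $\frac{d}{dt}S_nA([x|t|y]_n)$ under $\mu^{[x|t|y]_n}_n$, and invoke the class-$\mathcal{E}$ monotonicity together with the FKG hypothesis to conclude non-negativity. Your explicit remarks on justifying differentiation under the integral and on applying the FKG hypothesis with the shifted boundary condition $[y|t|y]_n$ address points the paper passes over silently.
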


\begin{proof}
Since $\varphi$ depends only of its first $n$ coordinates, we have
\[
\int_{\mathbb{R}^{\mathbb{N}}}\varphi([x|t|y]_n)d\mu^{[x|t|y]_n}_n(x) = \int_{\mathbb{R}^{\mathbb{N}}}\varphi([x|y]_n)d\mu^{[x|t|y]_n}_n(x) \,.
\]

On the other hand, $A$ belongs to the class $\mathcal{E}$, then the function $\frac{e^{S_n A([x|t|y]_n)}}{Z^{[y|t|y]_n}_n}$ is differentiable regarding the variable $t$. Therefore, using the fact that
\begin{align}
&\frac{d}{dt}\int_{\mathbb{R}^{\mathbb{N}}}\varphi([x|y]_n)d\mu^{[x|t|y]_n}_n(x) \nonumber \\
&= \frac{d}{dt} \frac{1}{Z^{[y|t|y]_n}_n} \int_{\mathbb{R}^n}\varphi([x|y]_n) e^{S_n A([x|t|y]_n)} f(x_1) \ldots f(x_n) dx_1 \ldots dx_n \nonumber \\
&= \int_{\mathbb{R}^n}\varphi([x|y]_n)\frac{d}{dt}\left(\frac{e^{S_n A([x|t|y]_n)}}{Z^{[y|t|y]_n}_n}\right) f(x_1) \ldots f(x_n) dx_1 \ldots dx_n \,,
\label{derivative-integral}
\end{align}

it is enough to show  that $\frac{d}{dt}\left(\frac{e^{S_n A([x|t|y]_n)}}{Z^{[y|t|y]_n}_n}\right)$ is non-negative.

Note that by the derivative quotient rule for functions from $\mathbb{R}$ into $\mathbb{R}$, we have
\[
\frac{d}{dt}\left(\frac{e^{S_n A([x|t|y]_n)}}{Z^{[y|t|y]_n}_n}\right) = \frac{e^{S_n A([x|t|y]_n)}}{Z^{[y|t|y]_n}_n} \left(\frac{d}{dt}S_n A([x|t|y]_n) - \frac{1}{Z^{[y|t|y]_n}_n}\frac{d}{dt} Z^{[y|t|y]_n}_n \right) \,.
\]

Besides that, the last term in right side of the equation is equal to
\begin{align}
&\frac{1}{Z^{[y|t|y]_n}_n}\frac{d}{dt} Z^{[y|t|y]_n}_n \nonumber \\
&= \frac{1}{Z^{[y|t|y]_n}_n} \int_{\mathbb{R}^n} \frac{d}{dt} \left(e^{S_n A([v|t|y]_n)}\right) f(v_1) \ldots f(v_n) dv_1 \ldots dv_n \nonumber \\
&= \frac{1}{Z^{[y|t|y]_n}_n} \int_{\mathbb{R}^n} e^{S_n A([v|t|y]_n)} \frac{d}{dt} \left(S_n A([v|t|y]_n)\right) f(v_1) \ldots f(v_n) dv_1 \ldots dv_n \nonumber \\
&= \int_{\mathbb{R}^{\mathbb{N}}}\frac{d}{dt}\left(S_n A([v|t|y]_n)\right) d\mu^{[v|t|y]_n}_n(v) \nonumber \,.
\end{align}

Therefore, by the above equality we obtain that
\begin{align}
&\frac{d}{dt}\left(\frac{e^{S_n A([x|t|y]_n)}}{Z^{[y|t|y]_n}_n}\right) \nonumber \\
&= \frac{e^{S_n A([x|t|y]_n)}}{Z^{[y|t|y]_n}_n} \left(\frac{d}{dt}S_n A([x|t|y]_n) - \int_{\mathbb{R}^{\mathbb{N}}}\frac{d}{dt}\left(S_n A([v|t|y]_n)\right) d\mu^{[v|t|y]_n}_n(v) \right) \,.
\label{derivative-integral-interior}
\end{align}

Then, replacing (\ref{derivative-integral-interior}) in (\ref{derivative-integral}), we get the following
\begin{align}
&\frac{d}{dt}\int_{\mathbb{R}^{\mathbb{N}}}\varphi([x|y]_n)d\mu^{[x|t|y]_n}_n(x) \nonumber \\
&= \int_{\mathbb{R}^n} \varphi([x|y]_n) \frac{e^{S_n A([x|t|y]_n)}}{Z^{[y|t|y]_n}_n} \nonumber \\
&\ \ \ \ \left(\frac{d}{dt}S_n A([x|t|y]_n) - \int_{\mathbb{R}^{\mathbb{N}}}\frac{d}{dt}\left(S_n A([v|t|y]_n)\right) d\mu^{[v|t|y]_n}_n(v) \right) f(x_1) \ldots f(x_n) dx_1 \ldots dx_n \nonumber \\
&= \int_{\mathbb{R}^{\mathbb{N}}} \varphi([x|y]_n) \frac{d}{dt}\left(S_n A([x|t|y]_n)\right) d\mu^{[x|t|y]_n}_n(x) \nonumber \\
&\ \ \ \  - \int_{\mathbb{R}^{\mathbb{N}}} \varphi([x|y]_n) d\mu^{[x|t|y]_n}_n(x) \cdot \int_{\mathbb{R}^{\mathbb{N}}}\frac{d}{dt}\left(S_n A([v|t|y]_n)\right) d\mu^{[v|t|y]_n}_n(v) \nonumber \\
&\geq 0 \nonumber \,.
\end{align}

Observe that the last inequality is a consequence of (\ref{FKG-inequality}).
\end{proof}

The following Theorem, which is the main result of this section, provides some conditions in order to guarantee that each probability measure $\mu^y_n$ such as was defined in (\ref{probability-finite-set}) satisfies the \emph{FKG}-inequality. In other words, each one of the members of the Gibbsian specification $(K_n)_{n \in \mathbb{N}}$ satisfies the \emph{FKG}-inequality.

\begin{theorem}
Let $A : \mathbb{R}^{\mathbb{N}} \to \mathbb{R}$ be a potential that belongs to the class $\mathcal{E}$. Then, for any $n \in \mathbb{N}$ and each $y \in \mathbb{R}^{\mathbb{N}}$, the probability measure $\mu^y_n$, which was defined in (\ref{probability-finite-set}) satisfies (\ref{FKG-inequality}).
\end{theorem}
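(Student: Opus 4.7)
The plan is to prove the claim by induction on $n$. For the base case $n=1$, the measure $\mu^y_1$ is, after projection onto the first coordinate, the Borel probability measure on $\mathbb{R}$ with density proportional to $e^{A([x_1|y]_1)}f(x_1)$, so for increasing $\alpha$-H\"older continuous $\varphi,\psi$ depending only on the first coordinate, the desired inequality is exactly the one-dimensional \emph{FKG} inequality supplied by Lemma \ref{FKG-inequality-dim1}.

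For the inductive step, I would assume the \emph{FKG} inequality (\ref{FKG-inequality}) at level $n$ for the measure $\mu^z_n$ with every boundary condition $z\in\mathbb{R}^{\mathbb{N}}$. Given increasing $\alpha$-H\"older continuous functions $\varphi,\psi$ depending only on the first $n+1$ coordinates, I would apply the preceding decomposition lemma (with the parameter $t$ occupying the $(n+1)$-th slot) to each of the three integrals $\int\varphi\psi\,d\mu^y_{n+1}$, $\int\varphi\,d\mu^y_{n+1}$ and $\int\psi\,d\mu^y_{n+1}$, rewriting each as $\int_{\mathbb{R}}F_\chi(t)\,d\eta(t)$, where
\[
F_\chi(t):=\int_{\mathbb{R}^{\mathbb{N}}}\chi([x|t|y]_n)\,d\mu^{[y|t|y]_n}_n(x)\qquad\text{for }\chi\in\{\varphi,\psi,\varphi\psi\}.
\]

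For each fixed $t$, the maps $x\mapsto\varphi([x|t|y]_n)$ and $x\mapsto\psi([x|t|y]_n)$ are increasing in the first $n$ coordinates, so the inductive hypothesis applied to $\mu^{[y|t|y]_n}_n$ yields the pointwise bound $F_{\varphi\psi}(t)\ge F_\varphi(t)F_\psi(t)$. The remaining reduction is to prove that $F_\varphi$ and $F_\psi$ are non-decreasing functions of $t\in\mathbb{R}$, for then a final application of Lemma \ref{FKG-inequality-dim1} to the probability measure $\eta$ on $\mathbb{R}$ would give
\[
\int\varphi\psi\,d\mu^y_{n+1}\ge\int F_\varphi F_\psi\,d\eta\ge\int F_\varphi\,d\eta\cdot\int F_\psi\,d\eta=\int\varphi\,d\mu^y_{n+1}\cdot\int\psi\,d\mu^y_{n+1},
\]
closing the induction.

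To verify the monotonicity of $F_\varphi$ (and similarly of $F_\psi$), for $t_1<t_2$ I would split
\[
F_\varphi(t_2)-F_\varphi(t_1)=\bigl[F_\varphi(t_2)-G(t_1,t_2)\bigr]+\bigl[G(t_1,t_2)-F_\varphi(t_1)\bigr],
\]
where $G(t_1,t_2):=\int_{\mathbb{R}^{\mathbb{N}}}\varphi([x|t_1|y]_n)\,d\mu^{[y|t_2|y]_n}_n(x)$. The first bracket is non-negative because $\varphi$ is increasing in its $(n+1)$-th slot, and the second bracket is non-negative by Lemma \ref{increasing-integral} applied to the increasing function $x\mapsto\varphi([x|t_1|y]_n)$ of the first $n$ coordinates, whose hypotheses are supplied by the inductive \emph{FKG} at level $n$ and by the assumption $A\in\mathcal{E}$. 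The main obstacle is precisely this monotonicity step, since the integrand and the measure both depend on $t$ and those dependencies must be disentangled; everything else is a routine assembly of the lemmas already established.
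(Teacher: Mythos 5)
Your proof is correct and follows essentially the same route as the paper: induction on $n$, the decomposition of $\mu^y_{n+1}$ through the auxiliary measure $\eta$, the pointwise application of the inductive hypothesis, monotonicity in $t$ of the conditional averages, and a final application of Lemma \ref{FKG-inequality-dim1}. In fact your two-bracket treatment of the monotonicity of $F_\varphi$ is slightly more careful than the paper's, which invokes Lemma \ref{increasing-integral} directly even though the $\varphi$ in the theorem depends on $n+1$ coordinates while that lemma is stated only for functions of the first $n$ coordinates; your separation of the $t$-dependence of the integrand from that of the measure is exactly the patch needed to make that step rigorous.
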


\begin{proof}
We are going to demonstrate this theorem by induction. The case $n = 1$ follows directly of Lemma (\ref{FKG-inequality-dim1}), since the probability measure $\mu^y_1$ is defined on the Borelian sets in $\mathbb{R}$.

We assume the claim of this Theorem for $n \geq 1$, and we will demonstrate that this implies the same result for $n + 1$. Let $\varphi$ and $\psi$ increasing $\alpha$-H\"older continuous functions that depends only of its first $n+1$ coordinates, then it follows from the definition of $\mu^y_{n+1}$ that
\begin{align}
&\int_{\mathbb{R}^{\mathbb{N}}} \varphi([x|y]_{n+1})\psi([x|y]_{n+1}) d\mu^{[x|y]_{n+1}}_{n+1}(x) \nonumber \\
&= \frac{1}{Z^y_{n+1}}\int_{\mathbb{R}^{n+1}}e^{S_{n+1} A([x|y]_{n+1})}\varphi([x|y]_{n+1})\psi([x|y]_{n+1}) f(x_1) \ldots f(x_{n+1})dx_1 \ldots dx_{n+1} \nonumber \\
&= \frac{1}{Z^y_{n+1}}\int_{\mathbb{R}} e^{A([t|\sigma^n(y)]_1)} \nonumber \\
& \ \ \ \ \left(\int_{\mathbb{R}^n}e^{S_n A([x|t|y]_n)}\varphi([x|t|y]_n)\psi([x|t|y]_n) f(x_1) \ldots f(x_n)dx_1 \ldots dx_n\right)f(t)dt \nonumber \\
&= \int_{\mathbb{R}} \frac{Z^{[y|t|y]}_n}{Z^y_{n+1}} e^{A([t|\sigma^n(y)]_1)} \left(\int_{\mathbb{R}^{\mathbb{N}}}\varphi([x|t|y]_n)\psi([x|t|y]_n) d\mu^{[x|t|y]_n}_n(x)\right)f(t)dt \nonumber \\
&= \int_{\mathbb{R}} \left(\int_{\mathbb{R}^{\mathbb{N}}}\varphi([x|t|y]_n)\psi([x|t|y]_n) d\mu^{[x|t|y]_n}_n(x)\right) d\eta(t) \nonumber \\
&\geq \int_{\mathbb{R}} \left(\int_{\mathbb{R}^{\mathbb{N}}}\varphi([x|t|y]_n)d\mu^{[x|t|y]_n}_n(x)\right) \cdot \left(\int_{\mathbb{R}^{\mathbb{N}}}\psi([x|t|y]_n) d\mu^{[x|t|y]_n}_n(x)\right)d\eta(t) \nonumber \,,
\end{align}

where the last inequality is obtained by the induction hypothesis. On other hand, by Lemma (\ref{increasing-integral}) we have that the maps $t \mapsto \int_{\mathbb{R}^{\mathbb{N}}} \varphi d\mu^{[y|t|y]_n}_n$ and $t \mapsto \int_{\mathbb{R}^{\mathbb{N}}} \psi d\mu^{[y|t|y]_n}_n$ are real increasing functions. Therefore, it  follows from  Lemma (\ref{FKG-inequality-dim1}) that
\begin{align}
&\int_{\mathbb{R}^{\mathbb{N}}} \varphi([x|y]_{n+1})\psi([x|y]_{n+1}) d\mu^{[x|y]_{n+1}}_{n+1}(x) \nonumber \\
&\geq \int_{\mathbb{R}} \left(\int_{\mathbb{R}^{\mathbb{N}}}\varphi([x|t|y]_n)d\mu^{[x|t|y]_n}_n(x)\right)d\eta(t) \cdot \int_{\mathbb{R}} \left(\int_{\mathbb{R}^{\mathbb{N}}}\psi([x|t|y]_n) d\mu^{[x|t|y]_n}_n(x)\right)d\eta(t) \nonumber \,.
\end{align}

Furthermore, observe that the same procedure used above guarantees that
\[
\int_{\mathbb{R}} \left(\int_{\mathbb{R}^{\mathbb{N}}}\varphi([x|t|y]_n)d\mu^{[x|t|y]_n}_n(x)\right)d\eta(t) = \int_{\mathbb{R}^{\mathbb{N}}}\varphi([x|y]_{n+1})d\mu^{[x|y]_{n+1}}_{n+1}(x) \,,
\]

and the same conclusion is valid for $\psi$. Therefore,
\begin{align}
&\int_{\mathbb{R}^{\mathbb{N}}} \varphi([x|y]_{n+1})\psi([x|y]_{n+1}) d\mu^{[x|y]_{n+1}}_{n+1}(x) \nonumber \\
&\geq \int_{\mathbb{R}^{\mathbb{N}}}\varphi([x|y]_{n+1})d\mu^{[x|y]_{n+1}}_{n+1}(x) \cdot \int_{\mathbb{R}^{\mathbb{N}}}\psi([x|y]_{n+1}) d\mu^{[x|y]_{n+1}}_{n+1}(x) \nonumber \,.
\end{align}
\end{proof}

\section*{Acknowledgments}

The authors are grateful to the professor Leandro Cioletti for reading the first draft of this paper and his useful suggestions that improved our final version.

\end{document}